\scrollmode

\documentclass[11pt]{amsart}
\usepackage{amssymb}
\usepackage[all]{xy}

\theoremstyle{plain}
\newtheorem{thm}{Theorem}

\newtheorem*{thmA}{Theorem A}
\newtheorem*{thmBp}{Theorem B$^\prime$}
\newtheorem*{thmB}{Theorem B}
\newtheorem*{thmC}{Theorem C}
\newtheorem*{thmD}{Theorem D}
\newtheorem*{corE}{Corollary E}
\newtheorem{lem}[thm]{Lemma}
\newtheorem{cor}[thm]{Corollary}

\newtheorem{prop}[thm]{Proposition}
\newtheorem{ques}{Question}

\theoremstyle{remark}

\newtheorem{rem}[thm]{Remark}

\numberwithin{thm}{section}
\numberwithin{equation}{section}

\theoremstyle{definition}

\newcommand{\ab}{\mathrm{ab}}

\newcommand{\caR}{\mathcal{R}}
\newcommand{\triv}{\{1\}}

\newcommand{\ZbG}{\Z_\bullet[G]}
\newcommand{\ZG}{\Z[G]}

\newcommand{\tc}{\mathbf{tc}}
\newcommand{\tk}{\mathbf{tk}}
\newcommand{\tf}{\mathrm{tf}}
\newcommand{\gtf}{\mathrm{gtf}}
\newcommand{\scd}{\mathrm{scd}}
\newcommand{\cl}{\mathrm{cl}}
\newcommand{\Aut}{\mathrm{Aut}}


\DeclareMathOperator{\ccd}{cd}


\newcommand{\Z}{\mathbb{Z}}

\newcommand{\Q}{\mathbb{Q}}

\newcommand{\caO}{\mathcal{O}}
\newcommand{\eucl}{\mathfrak{cl}}
\newcommand{\caV}{\mathcal{V}}
\newcommand{\Gal}{\mathrm{Gal}}

\newcommand{\boX}{\mathbf{X}}
\newcommand{\boY}{\mathbf{Y}}

\newcommand{\cMF}{\mathbf{cMF}}

\newcommand{\boc}{\mathbf{c}}
\newcommand{\bok}{\mathbf{k}}

\newcommand{\boB}{\mathbf{B}}
\newcommand{\boZ}{\mathbf{Z}}

\newcommand{\Ab}{\mathbf{Ab}}

\newcommand{\caM}{\mathcal{M}}

\newcommand{\kernel}{\mathrm{ker}}
\newcommand{\coker}{\mathrm{coker}}
\newcommand{\image}{\mathrm{im}}
\newcommand{\iid}{\mathrm{id}}
\newcommand{\Hom}{\mathrm{Hom}}
\newcommand{\Ext}{\mathrm{Ext}}

\newcommand{\hH}{\widehat{\mathrm{H}}}

\newcommand{\Zmod}{{}_{\Z}\mathbf{mod}}

\newcommand{\argu}{\hbox to 7truept{\hrulefill}}

\begin{document}

\title[A group theoretical version of Hilbert's theorem 90]{A group theoretical 
version of\\ Hilbert's theorem 90}
\author{C. Quadrelli and Th. Weigel}
\date{\today}
\address{C. Quadrelli, Th. Weigel\\ Dipartimento di Matematica e Applicazioni\\
Universit\`a di Milano-Bicocca\\
Ed.~U5, Via R.Cozzi 53\\
20125 Milano, Italy}
\email{c.quadrelli@campus.unimib.it, thomas.weigel@unimib.it}

\begin{abstract}
It is shown that for a normal subgroup $N$ of a group $G$, $G/N$ cyclic,
the kernel of the map $N^{\ab}\to G^{\ab}$ satisfies
the classical Hilbert 90 property (cf. Thm~A).
As a consequence, if $G$ is finitely generated, $|G:N|<\infty$, and all
abelian groups $H^{\ab}$, $N\subseteq H\subseteq G$, are torsion free,
then $N^{\ab}$ must be a pseudo permutation module for $G/N$
(cf. Thm.~B). 
From Theorem~A one also deduces a non-trivial relation between the order of the transfer kernel and co-kernel
which determines the Hilbert-Suzuki multiplier (cf. Thm.~C).
Translated into a number theoretic context one obtains a strong form of Hilbert's theorem 94 (Thm.~\ref{thm:h94}). 
In case that $G$ is finitely generated and $N$ has prime index $p$ in $G$ there holds a
``generalized Schreier formula'' involving the torsion free ranks of $G$ and $N$ and 
the ratio of the order of the transfer kernel and co-kernel (cf. Thm.~D).
\end{abstract}

\keywords{Hilbert's theorem 90, pseudo-permutation modules, transfer kernels, generalized Schreier formula,
cohomological Mackey functors, Herbrand quotient}

\dedicatory{To the memory of K.W.~Gruenberg}

\subjclass[2010]{Primary: 20J05; secondary 11R29, 20E18}
\maketitle

\section{Introduction}
\label{s:hil90}
Certainly, Hilbert's theorem 90 is
one of the first fundamental results in modern algebraic number
theory. In its original form one may state it as follows (cf. \cite[Thm.~90]{hil:alg}): If $E/F$ is a finite Galois extension with cyclic Galois group $G=\langle \sigma\rangle$, then any element $x$ in the kernel of the map
$N_G\colon L^\times\to K^\times$, $N_G(z)=\prod_{g\in G} g(z)$, $z\in L^\times$, can be written as $x=\sigma(y)y^{-1}$ for some $y\in L^\times$, i.e., in more sophisticated terms
$\hH^{-1}(G,L^\times)=0$.
In this note we want to establish an analogue of Hilbert's theorem~90
in a group theoretical context and to discuss some of its immediate consequences.
A (closed) normal subgroup $N$ of a (pro-$p$) group $G$ will be said to be {\it co-cyclic},
if $G/N$ is a cyclic group. For any (closed) subgroup $U$ of $G$ 
\begin{equation}
\label{eq:ab}
U^{\ab}=U/\cl([U,U])
\end{equation}
will denote the {\it maximal abelian (pro-$p$) quotient} of $U$. In particular, one has
a canonical map $t_{U,G}\colon U^{\ab}\to G^{\ab}$ induced by inclusion.
Elementary commutator calculus implies the following 
group theoretical version of Hilbert's theorem~90
(cf. Lemma~\ref{lem:cycsec}(b), \eqref{eq:seccoh1}, Prop.~\ref{prop:h90}).

\begin{thmA}
Let $G$ be a (pro-$p$) group, let $N$ be a co-cyclic (closed) normal subgroup of $G$, and
let $s\in G$ be an element such that $G=S\,N$ for $S=\cl(\langle s\rangle)$.
Then $\kernel(t_{N,G})=(s-1)\cdot N^{\ab}=[s,N]\cdot [N,N]/[N,N]$.
In particular, $\boc_1(G/N,\Ab)=0$.
\end{thmA}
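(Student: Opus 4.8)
The plan is to turn the statement into a short commutator computation carried out modulo $\cl([N,N])$.

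First I would record the elementary observation that, since $G/N$ is cyclic and hence abelian, $[G,G]\subseteq N$, so $\cl([G,G])\subseteq N$ as $N$ is closed. Consequently $\kernel(t_{N,G})$, which a priori equals $(N\cap\cl([G,G]))\cdot\cl([N,N])/\cl([N,N])$, is simply the image of $\cl([G,G])$ in $N^{\ab}$. Thus it remains to identify this image with $(s-1)\cdot N^{\ab}$.

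For this I would pass to $\bar G=G/\cl([N,N])$. Here $A:=N^{\ab}=N/\cl([N,N])$ is a closed abelian normal subgroup of $\bar G$ on which $\bar G$ acts by conjugation through the (pro)cyclic group $C=G/N$, topologically generated by the image $\bar s$ of $s$; moreover $\bar G=\cl(\langle\bar s\rangle)\cdot A$ by the hypothesis $G=S\,N$. Since $\bar G$ is topologically generated by $\bar s$ together with $A$, its (topological) commutator subgroup is the closed normal subgroup generated by the commutators of these generators; as the commutators among elements of $A$ vanish, $\cl([\bar G,\bar G])$ is the closed normal subgroup generated by $\{[\bar s,a]\mid a\in A\}$. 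Because $A$ is abelian, conjugation by $A$ fixes $A$ pointwise, so this normal closure coincides with the closed $C$-submodule of $A$ generated by the $[\bar s,a]$. Writing $A$ additively as a $C$-module, $[\bar s,a]=({}^{\bar s}a)\,a^{-1}$ corresponds to $(\bar s-1)\cdot a$, and the set $\{(\bar s-1)\cdot a\mid a\in A\}=(\bar s-1)\cdot A$ is already a $C$-submodule (it is a subgroup, it is stable under $\bar s$ since $\bar s\cdot(\bar s-1)a=(\bar s-1)(\bar s a)$, and it is closed, being the continuous image of the compact group $A$). Hence $\cl([\bar G,\bar G])=(\bar s-1)\cdot A$, which pulled back along $G\to\bar G$ gives $\kernel(t_{N,G})=(s-1)\cdot N^{\ab}$. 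The remaining equality with $[s,N]\cdot[N,N]/[N,N]$ is then formal: modulo $\cl([N,N])$ the map $n\mapsto[s,n]$ is a homomorphism with image $(\bar s-1)\cdot A$, so $[s,N]\cdot\cl([N,N])/\cl([N,N])=(s-1)\cdot N^{\ab}$ as well.

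Finally, the vanishing $\boc_1(G/N,\Ab)=0$ is just a reformulation: by the definition recalled earlier (cf. Lemma~\ref{lem:cycsec}(b), \eqref{eq:seccoh1}, Prop.~\ref{prop:h90}), $\boc_1(G/N,\Ab)$ is by construction the quotient $\kernel(t_{N,G})/(s-1)\cdot N^{\ab}$ — the ``$\hH^{-1}$''-slot of the Tate-type complex attached to the cohomological Mackey functor $\Ab$ over the cyclic group $G/N$ — which we have just shown to be trivial. I expect the whole argument to be routine commutator calculus; the only points requiring genuine care are the topological bookkeeping in the pro-$p$ case (that all the subgroups in sight are closed, in particular $(s-1)\cdot N^{\ab}$, which is where compactness of $N^{\ab}$ is used) and matching the computed quotient precisely with the definition of $\boc_1$ adopted in the paper.
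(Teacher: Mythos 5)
Your argument is correct and lands on exactly the identity the paper proves, namely $\cl([G,G])=[s,N]\cdot\cl([N,N])$, so the difference is one of organization rather than substance. The paper (Lemma~\ref{lem:cycsec}) reaches this by explicit commutator identities and a two-sided induction showing $[s^k,v]\in[s,N]\,[N,N]$ for all $k\in\Z$, followed by a separate density argument in the pro-$p$ case. You instead pass to $\bar G=G/\cl([N,N])$, invoke the general principle that the (closed) commutator subgroup of a group (topologically) generated by a set is the (closed) normal closure of the commutators of the generators, and then observe that the set $[\bar s,A]=(\bar s-1)A$, $A=N^{\ab}$, is already a closed $\langle\bar s\rangle$-stable subgroup --- closed because in the pro-$p$ case it is the continuous image of the compact group $A$, stable under $\bar s^{\pm1}$ because $\bar s$ acts bijectively on $A$. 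This single observation absorbs the paper's induction on the powers of $s$ and its density step, and it makes the module-theoretic content (that $\omega_{G/N}\cdot N^{\ab}=(s-1)\cdot N^{\ab}$ for a cyclic quotient) more transparent; the paper's version is more hands-on but proves the slightly stronger intermediate statement (a) of Lemma~\ref{lem:cycsec} along the way. Your reduction of $\kernel(t_{N,G})$ to the image of $\cl([G,G])$ in $N^{\ab}$ and the identification of $\boc_1(G/N,\Ab)$ with $\kernel(t_{N,G})/\omega_{G/N}\cdot N^{\ab}$ match what the paper does in Proposition~\ref{prop:h90}.
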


Let $G$ be a finite group, and let $\Z_\bullet$ be either $\Z$ or $\Z_p$. 
A left $\ZbG$-module $M$ is said to be a
{\it $\ZbG$-lattice}, if $M$ is finitely generated and $M$ - considered as $\Z_\bullet$-module - is torsion free. 
A $\ZbG$-lattice $M$ is said to be a {\it $\ZbG$-permutation module}, if there exists
a finite left $G$-set $\Omega$ such that $M$ is isomorphic to $\Z_\bullet[\Omega]$, the free
$\Z_\bullet$-module spanned by the elements of $\Omega$. Moreover, a $\ZbG$-lattice is said to be a
{\it pseudo $\ZbG$-permutation module}, if it is isomorphic to a direct summand of some 
$\ZbG$-permutation module.
From Theorem~A one concludes the following.

\begin{thmB}
Let $G$ be a finitely generated group, and let $N$ be a co-cyclic normal subgroup of finite index in $G$
with the property that for every subgroup $H$ of $G$, $N\subseteq H\subseteq G$,
the abelian group $H^{\ab}$ is torsion free. Then $N^{\ab}$ is a pseudo
$\Z[G/N]$-permutation module.
\end{thmB}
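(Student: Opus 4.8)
The plan is to recognise the family $(H^{\ab})_{N\subseteq H\subseteq G}$ as a cohomological Mackey functor for the finite cyclic group $C:=G/N$, to use Theorem~A to see that it is projective, and then to read off the conclusion from its value at the trivial subgroup.

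First I would dispose of the finiteness issues: $N$, being of finite index in the finitely generated group $G$, is finitely generated, so $N^{\ab}$ is a finitely generated abelian group, which is torsion free by the hypothesis applied to $H=N$; thus $N^{\ab}$ is a $\Z[C]$-lattice. (Here $C$ acts on $N^{\ab}$ because, $C$ being cyclic, each $H$ with $N\subseteq H\subseteq G$ is normal in $G$, so $G$ acts on $H^{\ab}$ by conjugation through $G/H$.) Next I would assemble $H\mapsto H^{\ab}$ into a cohomological Mackey functor $\uboAb$ for $C$: inclusions give the maps $t_{K,H}\colon K^{\ab}\to H^{\ab}$, the Verlagerung gives maps $H^{\ab}\to K^{\ab}$ the other way, conjugation gives the $C$-action, the Mackey double-coset identity is a standard property of the transfer, and cohomologicality is the classical relation $t_{K,H}\circ\mathrm{ver}=[H\colon K]$ on $H^{\ab}$. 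By hypothesis every value of $\uboAb$ is a $\Z$-lattice, and its value at the trivial subgroup of $C$ is the $\Z[C]$-module $N^{\ab}$.

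The essential input is Theorem~A applied, for each chain $N\subseteq K\subseteq H\subseteq G$, to the group $H$ with its (automatically) co-cyclic normal subgroup $K$: it yields $\kernel(t_{K,H})=(s-1)\,K^{\ab}$, the augmentation submodule of the $\Z[H/K]$-module $K^{\ab}$, and hence that the coinvariant module $K^{\ab}/(s-1)K^{\ab}$ embeds through $t_{K,H}$ into $H^{\ab}$ and is therefore torsion free. Equivalently, $\boc_1(H/K,\Ab)=0$ for every section $H/K$ of $C$; and I would invoke the structure theory of lattice-valued cohomological Mackey functors over cyclic groups to conclude that a functor with this property is a projective object of $\cMF$, i.e.\ a direct summand of a finite direct sum of fixed-point functors $\mathrm{FP}_{\Z[C/D]}$ of transitive permutation modules. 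Since $\mathrm{FP}_{\Z[C/D]}$ has value $\Z[C/D]$ at the trivial subgroup, evaluating the decomposition there displays $N^{\ab}$ as a direct summand of a $\Z[C]$-permutation module, i.e.\ as a pseudo $\Z[G/N]$-permutation module.

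The hard part is precisely the structural statement invoked in the last step — equivalently, that over a finite cyclic group a $\Z[C]$-lattice $M$ all of whose coinvariant modules $M_D=M/(s_D-1)M$ ($D=\langle s_D\rangle\leq C$) are torsion free must be a pseudo-permutation module (the converse being the trivial remark that coinvariants of a permutation module are free). I expect this to be established as a preliminary result, by induction on $|C|$, and the two delicate points are that one is forced to work over $\Z$ rather than over a field, so that Herbrand-quotient and integrality arguments take the place of semisimplicity, and that relative projectivity with respect to the subgroups of prime index must be squeezed out of the vanishing of $\boc_1$; the Mackey-functor formalism is what lets these conditions at all the different sections of $C$ be handled simultaneously, and the torsion-freeness assumption on every intermediate $H^{\ab}$ is exactly what feeds the induction.
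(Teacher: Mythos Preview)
Your approach is essentially the paper's: use Theorem~A together with the torsion-freeness hypothesis to obtain, for every intermediate $N\subseteq H\subseteq G$, the vanishing $\hH^{-1}(H/N,N^{\ab})=0$ (your phrasing ``coinvariants embed in $H^{\ab}$ via $t_{N,H}$, hence are torsion free'' is equivalent, since for a lattice $M$ over a cyclic group $D$ one has $\hH^{-1}(D,M)=\mathrm{tor}(M_D)$; the paper instead reaches the same conclusion via $\bok^0=\boc_1=0$ and the 6-term exact sequence~\eqref{eq:6term}), and then periodicity gives $H^1(H/N,N^{\ab})=0$ for every subgroup $H/N\leq G/N$.

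Two small divergences. First, the ``hard part'' you anticipate is not proved in the paper by an induction on $|C|$: the implication ``$H^1(D,M)=0$ for all $D\leq C$ $\Rightarrow$ $M$ is a pseudo $\Z[C]$-permutation module'' is quoted directly as a theorem of Endo--Miyata \cite[Thm.~1.5]{enmi:int1} (and \cite[Thm.~A]{blth:cyc} for the $\Z_p$-version in Theorem~B$'$). Second, your Mackey-functor-projectivity formulation is more than is actually used or needed: the paper never asserts that the full functor $\Ab$ is projective in $\cMF$ (indeed $\boc_0(G/N,\Ab)\simeq G/N\neq 0$, so one should be cautious about that claim), but works directly with the $\Z[G/N]$-module $N^{\ab}$. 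Your ``equivalent'' module-theoretic reformulation at the end is the correct target, and that is exactly what the paper cites.
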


The prefix ``pseudo'' arises from the phenomenon that for a finite group $G$
direct summands of $\ZG$-permutation modules are not necessarily
$\ZG$-permutation modules. This phenomenon does not occur
if $G$ is a finite $p$-group and $\Z_\bullet=\Z_p$, i.e., 
the pro-$p$ analogue of Theorem~B has the following form.

\begin{thmBp}
Let $G$ be a finitely generated pro-$p$ group, and let $N$
be a co-cyclic open normal subgroup of $G$
with the property that for every open subgroup $H$ of $G$, $N\subseteq H\subseteq G$,
the abelian pro-$p$ group $H^{\ab}$ is torsion free. Then $N^{\ab}$ is a $\Z_p[G/N]$-permutation module.
\end{thmBp}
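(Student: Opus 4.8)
The plan is to deduce Theorem~B$^\prime$ from Theorem~B together with the special structural features of finitely generated pro-$p$ groups and $\Z_p[G/N]$-modules. Write $\bar G = G/N$; since $G$ is a finitely generated pro-$p$ group and $N$ is open, $\bar G$ is a finite cyclic $p$-group, say of order $p^n$. The hypothesis gives that $H^{\ab}$ is torsion free for every open $H$ with $N\subseteq H\subseteq G$, so Theorem~B (applied in the profinite setting, using $\Z_\bullet=\Z_p$ throughout) already tells us that $N^{\ab}$ is a pseudo $\Z_p[\bar G]$-permutation module, i.e.\ a direct summand of a permutation module $\Z_p[\Omega]$ for some finite $\bar G$-set $\Omega$. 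The content of Theorem~B$^\prime$ over Theorem~B is therefore entirely the removal of the prefix ``pseudo'': one must show that a direct summand of a $\Z_p[\bar G]$-permutation module is again a $\Z_p[\bar G]$-permutation module, when $\bar G$ is a \emph{cyclic} $p$-group.

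First I would reduce to indecomposable summands. A finite $\bar G$-set decomposes into orbits, so $\Z_p[\Omega]\cong\bigoplus_i \Z_p[\bar G/\bar G_i]$ with each $\bar G_i$ a subgroup of the cyclic group $\bar G$, hence $\bar G_i = \bar G^{p^{m_i}}$ for some $m_i\le n$; thus the relevant permutation modules are exactly the $\Z_p[\bar G/\bar G^{p^m}] = \Z_p[C_{p^m}]$ for $0\le m\le n$, viewed as $\Z_p[\bar G]$-modules via the quotient map. The key input is then the theory of integral representations of cyclic $p$-groups: by the theorem of Diederichsen and Reiner (for $C_p$) and its extension, the indecomposable $\Z_p[C_{p^m}]$-lattices are classified, and more to the point the Krull--Schmidt theorem holds over $\Z_p[\bar G]$ because $\Z_p$ is a complete local ring (so endomorphism rings of lattices are semiperfect). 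Hence if $M$ is a direct summand of $\bigoplus_i \Z_p[C_{p^{m_i}}]$, then by Krull--Schmidt every indecomposable summand of $M$ is isomorphic to an indecomposable summand of some $\Z_p[C_{p^{m}}]$; so it suffices to show that each indecomposable direct summand of each $\Z_p[C_{p^m}]$ is itself a transitive permutation module, i.e.\ is isomorphic to $\Z_p[C_{p^k}]$ for some $k\le m$.

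Next I would establish that indecomposability claim. For $m=0$ and $m=1$ this is classical: $\Z_p[C_p]$ is an indecomposable $\Z_p[C_p]$-lattice (it is even a local ring after reduction considerations — more precisely $\Z_p[C_p]$ has no nontrivial idempotents, since $\Z_p[C_p]/p\Z_p[C_p]=\F_p[C_p]$ is local). For general $m$, induct on $m$: the augmentation-type short exact sequence $0\to \Z_p[C_{p^m}]\,\omega \to \Z_p[C_{p^m}] \to \Z_p[C_{p^{m-1}}]\to 0$, where $\omega$ is the ideal generated by $\sigma^{p^{m-1}}-1$, together with the fact that $\Z_p[C_{p^m}]$ is local (again because $\F_p[C_{p^m}]$ is local, being the group algebra of a $p$-group over a field of characteristic $p$), shows $\Z_p[C_{p^m}]$ is indecomposable as a module over itself, a fortiori as a $\Z_p[\bar G]$-module. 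So the only indecomposable direct summand of $\Z_p[C_{p^m}]$ is $\Z_p[C_{p^m}]$ itself, which is a transitive permutation $\Z_p[\bar G]$-module. Combining: every indecomposable summand of $\Z_p[\Omega]$ is some $\Z_p[C_{p^m}]$, hence $M = N^{\ab}$, being a direct sum of such indecomposables by Krull--Schmidt, is a genuine $\Z_p[\bar G]$-permutation module, as claimed.

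The main obstacle, and the place deserving the most care, is the invocation of Krull--Schmidt together with the indecomposability of $\Z_p[C_{p^m}]$ as a $\Z_p[\bar G]$-module (not merely over $\Z_p[C_{p^m}]$): one must be sure that no permutation lattice $\Z_p[C_{p^m}]$ splits off unexpected direct summands when regarded over the larger ring $\Z_p[\bar G]$, and that the Krull--Schmidt uniqueness is being applied in the correct category of $\Z_p[\bar G]$-lattices. This is exactly where the hypothesis ``pro-$p$ and $\Z_\bullet=\Z_p$'' does real work and the analogous statement fails over $\Z$: for a noncyclic or non-$p$ finite group, $\Z G$-permutation modules can have summands (e.g.\ $\Z$ with trivial action splitting off rationally but not integrally, or the Swan modules) that are not permutation modules, precisely the ``pseudo'' phenomenon flagged in the remark after Theorem~B. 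I would therefore spend most of the write-up pinning down that $\Z_p[\bar G]$ is semiperfect and that the group algebra $\F_p[\bar G]$ of a finite $p$-group is local, from which Krull--Schmidt for $\Z_p[\bar G]$-lattices and the indecomposability of each $\Z_p[C_{p^m}]$ follow cleanly, and the rest is the orbit bookkeeping sketched above.
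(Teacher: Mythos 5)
Your second step --- removing the prefix ``pseudo'' over $\Z_p$ --- is correct and well argued: $\Z_p[\bar G/H]$ is indecomposable with local endomorphism ring (via the locality of $\F_p[\bar G]$ and idempotent lifting), Krull--Schmidt holds for $\Z_p[\bar G]$-lattices, and hence any direct summand of a $\Z_p[\bar G]$-permutation module is again a permutation module; this is exactly the phenomenon the paper alludes to when explaining why the prefix ``pseudo'' disappears in the pro-$p$ case. The genuine gap is in your first step. Theorem~B is stated and proved only for \emph{discrete} groups, and its proof rests on the Endo--Miyata theorem \cite[Thm.~1.5]{enmi:int1}, which is a statement about $\Z[\Pi]$-lattices. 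There is no ``Theorem~B in the profinite setting'' available to quote: the assertion that a $\Z_p[\Pi]$-lattice ($\Pi$ a cyclic $p$-group) with $H^1(\Pi',M)=0$ for all subgroups $\Pi'$ is a direct summand of a permutation module is precisely the nontrivial input you would need, and it is neither proved in the paper nor an obvious corollary of the $\Z$-version (descending a $\Z_p[\Pi]$-lattice to a $\Z[\Pi]$-lattice with prescribed completion requires genus-theoretic arguments you do not supply, and for $|\Pi|\geq p^3$ the lattice category is of wild representation type, so no classification can be invoked). As written, your proof assumes essentially the hard part of what is to be shown.

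For comparison, the paper does not deduce Theorem~B$^\prime$ from Theorem~B at all. It runs the two proofs in parallel up to the point where $H^1(H/N,N^{\ab})=0$ is established for every subgroup $H/N$ of $G/N$ (via Theorem~A, the vanishing of $\tk(H/N)$ inside the torsion-free group $H^{\ab}$, the 6-term exact sequence \eqref{eq:6term}, and periodicity of Tate cohomology for cyclic groups), and then cites \cite[Thm.~A]{blth:cyc}, which states directly that such a $\Z_p[G/N]$-lattice is a genuine permutation module. To repair your argument you would either have to prove the $\Z_p$-analogue of the Endo--Miyata criterion (after which your Krull--Schmidt step is indeed a clean way to upgrade ``pseudo'' to ``genuine''), or replace your first step by the paper's cohomological argument together with the citation of \cite[Thm.~A]{blth:cyc}, which makes the Krull--Schmidt step unnecessary.
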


If $U$ is a (closed) normal subgroup of finite index in a (pro-$p$) group $G$ the {\it transfer}
\begin{equation}
\label{eq:trans}
i_{G,U}\colon G^{\ab}\longrightarrow U^{\ab}
\end{equation}
from $G$ to $U$ is given by \[i_{G,U}(g\,\cl([G,G]))=\prod_{r\in\caR} r g r^{-1}\cl([U,U])\]
where $\caR\subseteq G$ is a set of representatives for the right $U$-cosets,
i.e., $G=\bigsqcup_{r\in\caR} r\,U$, where $\sqcup$ denotes "disjoint union"
(cf. \cite[Chap.~10]{rob:grp}). For $G$ and $U$ as above  
\begin{align}
\tk(G/U)&=\kernel(i_{G,U})\label{eq:tk}
\intertext{is called the {\it transfer kernel}, and}
\tc(G/U)&=\coker\left(i_{G/U}^\circ\colon G^{\ab}\longrightarrow (U^{\ab})^{G/U}\right).\label{eq:tc}
\end{align}
the {\it transfer cokernel}, where $\argu^{G/U}$ are the {\it $G/U$-invariants} of a left $\Z_\bullet[G/U]$-module.
The order of the transfer kernel $\tk(G/U)$ for a finite group $G$ and $[G,G]\subseteq U$ has been subject
of intensive investigations (cf. \cite{kala:bord}, \cite{kala:tk}, \cite{kala:tk2}) which were stimulated by 
Hilbert's theorem 94 (cf. \cite[Thm.~94]{hil:alg}) and Ph. Furtw\"angler's solution
of Hilbert's "principal ideal conjeture" (cf. \cite{furt:tvt}). Certainly, the most celebrated theorem in this context is due to
H.~Suzuki (cf. \cite{suz:h94}) which states that if $G$ is finite and $G/U$ is abelian,
the order of $\tk(G/U)$ must be a multiple of the order of $G/U$, i.e.,
there exists a positive integer $s_{G,U}$ such that $|\tk(G/U)|=s_{G,U}\cdot |G:U|$.
However, the question which remains is the size of the {\it Hilbert-Suzuki multiplier} $s_{G,U}$.
If $U$ is co-cyclic in $G$, then one may answer the latter question using the group
theoretical version of Hilbert's theorem~90.

\begin{thmC}
Let $G$ be FAb (pro-$p$) group, and let $N$ be a co-cyclic (closed) normal subgroup of finite index.
Then one has
\begin{equation}
\label{eq:tri}
|\tk(G/N)|=|G/N|\cdot |\tc(G/N)|,
\end{equation}
i.e., if $G$ is finite, then $s_{G,N}=|\tc(G/N)|$.
\end{thmC}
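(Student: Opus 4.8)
The plan is to assemble three short exact sequences and to reduce the claimed identity to a Herbrand-type count over the cyclic group $Q:=G/N$. Write $M:=N^{\ab}$; by the FAb hypothesis this is a \emph{finite} $\Z_\bullet[Q]$-module, where the $Q$-action is the conjugation action of $G$ on $N^{\ab}$ (well defined since inner automorphisms of $N$ act trivially on $N^{\ab}$). Fix $s\in G$ as in Theorem~A, so that $\bar s:=sN$ generates $Q$. The first ingredient is merely the definition of the transfer kernel and cokernel: recalling that the image of the transfer $i_{G,N}$ is contained in $M^{Q}$, the equations \eqref{eq:tk} and \eqref{eq:tc} package into the exact sequence
\[
0\longrightarrow \tk(G/N)\longrightarrow G^{\ab}\xrightarrow{\ i^{\circ}_{G,N}\ } M^{Q}\longrightarrow \tc(G/N)\longrightarrow 0 .
\]
All four terms are finite, so multiplicativity of orders in a four-term exact sequence gives $|\tk(G/N)|\cdot|M^{Q}|=|G^{\ab}|\cdot|\tc(G/N)|$, that is,
\[
\frac{|\tk(G/N)|}{|\tc(G/N)|}=\frac{|G^{\ab}|}{|M^{Q}|} ,
\]
so it suffices to prove $|G^{\ab}|=|G/N|\cdot|M^{Q}|$.

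The second ingredient is Theorem~A, which is where the real work is done. Since $Q=\langle\bar s\rangle$ is cyclic, its augmentation ideal is the principal ideal $I_Q=(\bar s-1)\Z_\bullet[Q]$, whence $I_QM=(\bar s-1)M$; and as the $\bar s$-action on $M$ is induced by conjugation by $s$, the submodule $(s-1)\cdot N^{\ab}$ appearing in Theorem~A is exactly $(\bar s-1)M=I_QM$. Thus Theorem~A reads $\kernel(t_{N,G})=I_QM$, and therefore $\image(t_{N,G})\cong M/I_QM=M_Q$, the module of $Q$-coinvariants. On the other hand the cokernel of $t_{N,G}\colon N^{\ab}\to G^{\ab}$ is always $(G/N)^{\ab}$, which here equals $Q$ because $Q$ is abelian. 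Combining these two facts yields the short exact sequence
\[
0\longrightarrow M_Q\longrightarrow G^{\ab}\longrightarrow Q\longrightarrow 0 ,
\]
and hence $|G^{\ab}|=|Q|\cdot|M_Q|=|G/N|\cdot|M_Q|$.

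Finally, the two displayed reductions agree provided $|M^{Q}|=|M_Q|$, and here cyclicity of $Q$ is used once more: $M^{Q}=\kernel(\bar s-1\colon M\to M)$ while $M_Q=\coker(\bar s-1\colon M\to M)$, so the four-term exact sequence $0\to M^{Q}\to M\xrightarrow{\bar s-1}M\to M_Q\to 0$ of finite abelian groups forces $|M^{Q}|=|M_Q|$. Putting the three displays together gives $|\tk(G/N)|/|\tc(G/N)|=|G/N|$, which is \eqref{eq:tri}; specialising to finite $G$ and comparing with Suzuki's theorem then identifies $s_{G,N}$ with $|\tc(G/N)|$. The step I would expect to be the crux — exhibiting $M_Q$ as a submodule of $G^{\ab}$ with quotient $Q$, equivalently computing $\kernel(t_{N,G})$ exactly — is precisely what Theorem~A supplies for free; granting it, everything that remains is routine order-counting. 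The only other points needing a little care are the classical facts that the transfer image lies in $M^{Q}$ (so that $i^{\circ}_{G,N}$ is well defined and has the same kernel as $i_{G,N}$) and that $N$ acts trivially on $N^{\ab}$ (so that $M$ is genuinely a $Q$-module).
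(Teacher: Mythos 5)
Your proof is correct, and it reaches \eqref{eq:tri} by a genuinely more elementary route than the paper. The paper's proof is a two-line consequence of the machinery set up in \S\ref{s:cMF}: Proposition~\ref{prop:h90} (whose content is Theorem~A plus bookkeeping) identifies the four section cohomology groups of the Mackey functor $\Ab$ as $\tk(G/N)$, $\tc(G/N)$, $0$ and $G/N$, so that the Euler characteristic \eqref{eq:euler} equals $\rho(G/N)/|G:N|$; the identity \eqref{eq:euler2} then equates this with $h(G/N,N^{\ab})^{-1}$, which is $1$ because $N^{\ab}$ is finite. You dismantle exactly this computation into three elementary exact sequences: the defining four-term sequence $0\to\tk(G/N)\to G^{\ab}\to (N^{\ab})^{G/N}\to\tc(G/N)\to 0$, the sequence $0\to M_Q\to G^{\ab}\to G/N\to 0$ obtained from Theorem~A's description of $\kernel(t_{N,G})$ as $(\bar s-1)M$ (your identification of $(s-1)\cdot N^{\ab}$ with the full augmentation submodule $I_QM$, valid because $Q$ is cyclic, is the one point where cyclicity enters beyond Theorem~A itself), and the kernel--cokernel count $|M^Q|=|M_Q|$ for the endomorphism $\bar s-1$ of the finite group $M$. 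That last equality is precisely $h(G/N,N^{\ab})=1$ in disguise, so the two arguments perform the same count; what your version buys is complete independence from the formalism of cohomological Mackey functors, Tate cohomology and the Herbrand quotient, at the cost of not exhibiting Theorem~C as an instance of the multiplicative Euler characteristic \eqref{eq:eulermult}, which is what the paper reuses for Theorem~D. All the small points you flag (the transfer image landing in $(N^{\ab})^{G/N}$, hence $\kernel(i_{G,N})=\kernel(i^{\circ}_{G,N})$; the triviality of the $N$-action on $N^{\ab}$; finiteness of all terms under the FAb hypothesis, which also disposes of any closure issues in the pro-$p$ case) are indeed the only hypotheses needed and are all standard.
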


A finitely generated (pro-$p$) group $G$ is said to be\footnote{This abbreviation stands for {\it finite abelianizations}.} {\it FAb}, if for any (closed) subgroup $U$ of finite index in $G$ the group $U^{\ab}$ is finite\footnote{A pro-$p$ group $G$
satisfying $|G^{\ab}|<\infty$ must be finitely generated.}.
Theorem~C can be used to deduce a strong form of Hilbert's theorem 94 
stating that for finite cyclic unramified extensions of number fields the order
of the {\it capitulation kernel} is the product of the order of the {\it capitulation cokernel} times the degree
(cf. Thm.~\ref{thm:h94}). So far the capitulation cokernel has not found much attraction in algebraic number theory 
(cf. \cite{ngu:cocap}). This fact might be the reason why this stronger form 
of Hilbert's theorem 94 has not been established before.

If $G$ is a finitely generated (pro-$p$) group,
and $N$ is a (closed) normal subgroup of finite index, we call
\begin{equation}
\label{eq:trat}
\rho(G/N)=\frac{|\tk(G/N)|}{|\tc(G/N)|}\in\Q^\times
\end{equation}
the {\it transfer ratio} of $N$ in $G$. E.g., Theorem~C implies that if $G$
is a finite group and $N$ is co-cyclic, then $\rho(G/N)=|G:N|$.
Let $\Q_\bullet$ denote the quotient field of $\Z_\bullet$.
We will call the non-negative integer
\begin{equation}
\label{eq:tfr}
\tf(G)=\dim_{\Q_\bullet}(G^{\ab}\otimes_{\Z_\bullet}\Q_\bullet)
\end{equation}
the {\it torsion-free rank} of $G$, or for short the {\it tf-rank} of $G$.
One has the following ``generalized Schreier formula'' involving the transfer ratio.

\begin{thmD}
Let $G$ be a finitely generated (pro-$p$) group,
and let $U$ be a (closed) subgroup of prime index $p$.
Then one has
\begin{equation}
\label{eq:ratfor}
\tf(U)=p\cdot\tf(G)+(1-p)(1-\log_p(\rho(G/U))),
\end{equation}
where $\log_p(\argu)$ denotes the logarithm to the base $p$.
\end{thmD}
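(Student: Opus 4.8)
The plan is to identify the transfer ratio $\rho(G/U)$ with $p$ times the reciprocal of the Herbrand quotient of $U^{\ab}$, and then to recover that reciprocal from the torsion-free ranks of $G$ and $U$. Write $\Z_\bullet\in\{\Z,\Z_p\}$ for the relevant coefficient ring, $\Q_\bullet$ for its fraction field, and $M:=U^{\ab}$. Since $[G:U]=p$ is prime, $U$ is normal (automatic when $G$ is pro-$p$), so $C:=G/U=\langle\sigma\rangle$ is cyclic of order $p$ and $M$ is a finitely generated $\Z_\bullet[C]$-module. I would begin by recording the standard transfer identities $t_{U,G}\circ i_{G,U}=p\cdot\iid_{G^{\ab}}$ and $i_{G,U}\circ t_{U,G}=N_C$ on $M$, where $N_C=1+\sigma+\dots+\sigma^{p-1}$; the elementary fact that $\image(t_{U,G})=U[G,G]/[G,G]$, whence $\coker(t_{U,G})\cong G/U[G,G]=C$; and $\kernel(t_{U,G})=(\sigma-1)M$, which is precisely Theorem~A (applied with any $s\in G\setminus U$ and $S=\cl(\langle s\rangle)$). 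Combining these, $M_C:=M/(\sigma-1)M\cong\image(t_{U,G})$ fits into a short exact sequence $0\to M_C\to G^{\ab}\to C\to0$, so that $\tf(G)=\rk_{\Z_\bullet}(M_C)$, while $\tf(U)=\rk_{\Z_\bullet}(M)$ holds tautologically.

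The second step is to compute the Herbrand quotient $q(M):=|\hH^{0}(C,M)|\cdot|\hH^{-1}(C,M)|^{-1}$, which is finite because $M$ is finitely generated over $\Z_\bullet$. Writing $V:=M\otimes_{\Z_\bullet}\Q_\bullet\cong\Q_\bullet^a\oplus\Q_\bullet(\zeta_p)^b$ as a module over $\Q_\bullet[C]\cong\Q_\bullet\times\Q_\bullet(\zeta_p)$, one has $a=\dim_{\Q_\bullet}V^{C}=\dim_{\Q_\bullet}V_{C}=\rk_{\Z_\bullet}(M_C)=\tf(G)$ and $a+b(p-1)=\dim_{\Q_\bullet}V=\rk_{\Z_\bullet}(M)=\tf(U)$, so $b=(\tf(U)-\tf(G))/(p-1)$. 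Since $q$ is multiplicative on short exact sequences, equals $1$ on finite modules (hence depends only on $V$), and takes the values $q(\Z_\bullet)=p$ and $q(\Z_\bullet[C])=1$, the sequence $0\to\Z_\bullet\xrightarrow{N_C}\Z_\bullet[C]\to L\to0$ (in which $L\otimes_{\Z_\bullet}\Q_\bullet\cong\Q_\bullet(\zeta_p)$) yields $q(L)=p^{-1}$ and therefore $q(M)=p^{a}\cdot(p^{-1})^{b}=p^{\tf(G)-(\tf(U)-\tf(G))/(p-1)}$.

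The heart of the argument is the identity $\rho(G/U)=p/q(M)$. Since $N_C=i_{G,U}\circ t_{U,G}$, we have $\kernel(N_C|_M)=t_{U,G}^{-1}(\tk(G/U))$, hence $\hH^{-1}(C,M)=t_{U,G}^{-1}(\tk(G/U))/\kernel(t_{U,G})\cong\tk(G/U)\cap\image(t_{U,G})$; likewise $N_CM=i_{G,U}(\image(t_{U,G}))$, so, using $i_{G,U}(\image(t_{U,G}))\subseteq\image(i_{G,U})\subseteq M^{C}$ (the transfer image being $C$-invariant) and $\tc(G/U)=M^{C}/\image(i_{G,U})$, one finds $|\hH^{0}(C,M)|=|\tc(G/U)|\cdot|\image(i_{G,U})/i_{G,U}(\image(t_{U,G}))|$, where $\image(i_{G,U})/i_{G,U}(\image(t_{U,G}))\cong G^{\ab}/(\tk(G/U)+\image(t_{U,G}))$. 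Setting $\epsilon:=[\tk(G/U)+\image(t_{U,G}):\image(t_{U,G})]$, a divisor of $[G^{\ab}:\image(t_{U,G})]=p$, one reads off $|\hH^{-1}(C,M)|=|\tk(G/U)|/\epsilon$ and $|\hH^{0}(C,M)|=p\,|\tc(G/U)|/\epsilon$; the \emph{same} factor $\epsilon$ appears on both sides, so it cancels and $q(M)=p\,|\tc(G/U)|/|\tk(G/U)|=p/\rho(G/U)$. Comparing with $q(M)=p^{\tf(G)-(\tf(U)-\tf(G))/(p-1)}$ gives $\log_p\rho(G/U)=1-\tf(G)+(\tf(U)-\tf(G))/(p-1)$, and substituting this into $p\cdot\tf(G)+(1-p)\bigl(1-\log_p\rho(G/U)\bigr)$ and simplifying produces $\tf(U)$, i.e.\ \eqref{eq:ratfor}. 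The main obstacle I anticipate is precisely the bookkeeping in this last step: one has to arrange the short exact sequences so that the index $\epsilon$ occurs identically on the two sides and cancels — which is why one needs Theorem~A to pin down $\kernel(t_{U,G})$ on the nose rather than merely up to commensurability — together with the verification, in the pro-$p$ case, that $U$ (hence $M=U^{\ab}$) is finitely generated over $\Z_p$, so that the Tate cohomology groups and the Herbrand quotient are finite.
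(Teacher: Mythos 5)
Your argument is correct, and it follows the same overall strategy as the paper --- use Theorem~A to relate the transfer ratio to the Herbrand quotient of $U^{\ab}$, then convert that Herbrand quotient into rank data for the cyclic group of order $p$ --- but both halves are implemented by more elementary, self-contained means. For the identity $h(G/U,U^{\ab})=p\cdot\rho(G/U)^{-1}$ the paper goes through its formal apparatus: Proposition~\ref{prop:h90} (which packages Theorem~A as $\boc_1(G/U,\Ab)=0$ and records $\boc_0(G/U,\Ab)\simeq G/U$), the Euler characteristic of a cohomological Mackey functor, and the identification $\chi_{G/U}(\Ab)=h(G/U,U^{\ab})^{-1}$ coming from the 6-term exact sequence \eqref{eq:6term}. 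You instead unwind all of this by hand from $i_{G,U}\circ t_{U,G}=N_{G/U}$ and $t_{U,G}\circ i_{G,U}=p\cdot\iid$, tracking the index $\epsilon=[\tk(G/U)+\image(t_{U,G}):\image(t_{U,G})]$ and observing that it cancels; this is a correct substitute for \eqref{eq:6term} in the cyclic case, and you are right that Theorem~A is needed to pin down $\kernel(t_{U,G})=(\sigma-1)U^{\ab}$ exactly (your finiteness remarks on $\tk$ and $\tc$, needed for the index bookkeeping, follow from $t_{U,G}\circ i_{G,U}=p\cdot\iid$ and finite generation, as in \S\ref{ss:seccoh}). For the rank formula, the paper's Proposition~\ref{prop:herb}(c) is proved via Diederichsen's classification of $\Z_p[G/U]$-lattices, whereas you use the splitting $\Q_\bullet[G/U]\simeq\Q_\bullet\times\Q_\bullet(\zeta_p)$ together with the fact that the Herbrand quotient depends only on $M\otimes_{\Z_\bullet}\Q_\bullet$ --- essentially the classical argument the paper itself attributes to Neukirch. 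Your route buys independence from the Mackey-functor formalism and from integral representation theory; the paper's route buys a uniform apparatus that also delivers Theorem~C and Corollary~E for free.
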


A finitely generated (pro-$p$) group $G$ is said to be of {\it global tf-rank} $\gtf(G)\geq 0$, if
one has $\tf(U)=\tf(G)$ for any (closed) subgroup $U$ which is of finite index in $G$.
E.g., a finitely generated (pro-$p$) group $G$ is of global tf-rank $0$ if, and only if,
$G$ is FAb. Using Theorem~D one concludes easily that a finitely generated (pro-$p$) group
$G$, which is of global tf-rank, must satisfy
\begin{equation}
\label{eq:gtf}
\gtf(G)=1-\log_p(\rho(U/V)),
\end{equation}
for any pair of (closed) subgroups $U,V$ of $G$, $V\subseteq U$, $U$ is of finite index in $G$, and
$V$ is normal in $U$ satisfying $|U:V|=p$. The following result generalizes the
well known fact that a pro-$p$ group which is FAb and of strict cohomological dimension less or equal to $2$
must be the trivial group (cf. Cor.~\ref{cor:brum}).

\begin{corE}
Let $G$ be a pro-$p$ group of global tf-rank satisfying $\scd_p(G)\leq 2$. Then either $G=\triv$ or $\gtf(G)=1$.
\end{corE}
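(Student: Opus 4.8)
The plan is to deduce Corollary~E from the Euler--Poincar\'e computation already underlying Corollary~\ref{cor:brum}, now carrying the torsion-free rank along in place of the hypothesis that $G$ be $\mathrm{FAb}$. If $G=\triv$ there is nothing to prove, so assume $G\neq\triv$. Since $\scd_p(G)\le 2$ forces $\ccd_p(G)\le 2<\infty$, no closed subgroup of $G$ is isomorphic to $\Z/p\Z$; hence $G$ is torsion free and therefore infinite, and, being of global tf-rank, $G$ is finitely generated, so every open subgroup $U\subseteq G$ is finitely generated as well. In particular $G$ possesses a proper open subgroup, e.g.\ its Frattini subgroup.

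The key point is the cohomological fact behind Corollary~\ref{cor:brum}: $\scd_p(G)\le 2$ implies $H^2(U,\Q_p/\Z_p)=0$ for every open subgroup $U\subseteq G$. I would obtain this from the long exact cohomology sequence of $U$ attached to the short exact sequence of trivial discrete $G$-modules
\[
0\longrightarrow\Z_{(p)}\longrightarrow\Q\longrightarrow\Q_p/\Z_p\longrightarrow 0,
\]
using that $H^j(U,\Q)=0$ for $j\ge 1$ (as $U$ is pro-$p$ and $\Q$ is uniquely divisible) and $H^3(U,\Z_{(p)})=0$ (as $\scd_p(U)\le\scd_p(G)\le 2$): exactness then squeezes $H^2(U,\Q_p/\Z_p)$ between two zero groups. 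Next, feeding the exact sequence $0\to\F_p\to\Q_p/\Z_p\xrightarrow{\,p\,}\Q_p/\Z_p\to 0$ into the same machinery and using $H^2(U,\Q_p/\Z_p)=0$ identifies $H^2(U,\F_p)$ with $\coker\bigl(p\colon H^1(U,\Q_p/\Z_p)\to H^1(U,\Q_p/\Z_p)\bigr)\cong(U^{\ab}[p])^{\vee}$, which is finite since $U^{\ab}$ is a finitely generated $\Z_p$-module. As $\dim_{\F_p}(U^{\ab}/pU^{\ab})=\tf(U)+\dim_{\F_p}(U^{\ab}[p])$, the (well-defined) Euler--Poincar\'e characteristic of $U$ is then
\[
\chi_p(U)=1-\dim_{\F_p}H^1(U,\F_p)+\dim_{\F_p}H^2(U,\F_p)=1-\tf(U).
\]

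Finally I would invoke multiplicativity of $\chi_p$ for profinite groups of finite cohomological $p$-dimension, $\chi_p(U)=|G:U|\cdot\chi_p(G)$, which yields $1-\tf(U)=|G:U|\cdot(1-\tf(G))$ for every open $U\subseteq G$. Since $G$ is of global tf-rank, $\tf(U)=\tf(G)=\gtf(G)$ throughout, so $(1-\gtf(G))(1-|G:U|)=0$; choosing a proper open subgroup $U$ (so $|G:U|>1$) forces $\gtf(G)=1$. For $G$ $\mathrm{FAb}$ this recovers Corollary~\ref{cor:brum}, since then $\gtf(G)=0\neq 1$ and hence $G=\triv$; in the language of \eqref{eq:gtf} the conclusion reads $\rho(U/V)=1$. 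The step I expect to demand genuine care is the cohomological bookkeeping of the second paragraph --- keeping straight discrete, continuous and $\F_p$-coefficient cohomology and pinning down the multiplicativity statement for $\chi_p$ in the profinite category --- but all of this is exactly what is assembled for Corollary~\ref{cor:brum}, so Corollary~E is that computation with ``$0$'' replaced throughout by ``$\gtf(G)$''.
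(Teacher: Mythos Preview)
Your argument is correct, but it is not the paper's. In the paper, Corollary~E is restated (in slightly refined form) as Corollary~\ref{cor:brum} and is deduced in one line from Corollary~\ref{cor:gtf} and Theorem~\ref{thm:scd2}: by Theorem~\ref{thm:scd2}, $\scd_p(G)\le 2$ forces $\tk(U/V)=\tc(U/V)=0$ for every open $p$-section $(U,V)$, hence $\rho(U/V)=1$, and then Corollary~\ref{cor:gtf} (which encodes Theorem~D) gives $\gtf(G)=1-\log_p(\rho(U/V))=1$. So the paper stays entirely inside its Mackey-functor/transfer-ratio framework and in particular uses Theorem~A through Theorem~D. Your route via the Euler--Poincar\'e characteristic $\chi_p(U)=1-\tf(U)$ and its multiplicativity in open subgroups is the classical Serre-style computation; it is self-contained and bypasses the entire machinery of the paper, at the cost of not exhibiting the link to the transfer kernel/cokernel. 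One point of confusion: you repeatedly appeal to ``the Euler--Poincar\'e computation already underlying Corollary~\ref{cor:brum}'', but no such computation appears there---Corollary~\ref{cor:brum} \emph{is} Corollary~E, and the passage you invoke for the FAb case is the folklore remark in the introduction, not a separate result proved by your method. This does not affect the validity of your argument, only the framing.
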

 
The proof of Theorem~A is elementary, while
the proofs of Theorem B-D are easy but require some more
sophisticated ideas from the theory of cohomological Mackey functors
as well as some facts from the representation theory and cohomology theory
of cyclic groups. Nevertheless, in neither of the statements the reader will find any
trace of these sophisticated theories.

\vskip9pt
\noindent
{\bf Notation:}
As discrete groups and pro-$p$ groups behave quite similar, we will deal with
these two cases simultaneously.
We just add in parenthesis (...) the additional hypothesis or conclusions
in the case of pro-$p$ groups. By $\cl(\argu)$ we denote the closure
operation in a topological space. Moreover, $\Z_\bullet$ will denote the ring of integers $\Z$
in the case of discrete groups, and the ring of $p$-adic integers $\Z_p$ in the case
of pro-$p$ groups.


\section{Commutator calculus}
\label{s:commcal}
Let $G$ be a (pro-$p$) group. For two elements $x,y\in G$ we denote by
\begin{equation}
\label{eq:comm1}
[x,y]=x\,y\,x^{-1}y^{-1}
\end{equation}
their {\it commutator}, while for two (closed) subgroups $U$ and $V$ of $G$ we put
\begin{equation}
\label{eq:comm2}
[U,V]=\cl(\langle\,[x,y]\mid x,y\in U\,\rangle).
\end{equation}
From the commutator calculus in groups
one deduces the following.

\begin{lem}
\label{lem:cycsec}
Let $N$ be a co-cyclic (closed) normal subgroup of a (pro-$p$) group $G$,
and let $s\in G$ be an element such that $G=S\,N$
for $S=\cl(\langle s\rangle)$. Then
\begin{itemize}
\item[(a)] $[G,G]=[S,N]\,[N,N]$.
\item[(b)] $[G,G]=[s,N]\cdot [N,N]$, where
$[s,N]=\{\,[s,v]\mid v\in N\,\}$.
\end{itemize}
\end{lem}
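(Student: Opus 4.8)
The plan is to reduce both assertions to a single module‑theoretic identity. Since $G/N$ is cyclic, hence abelian, one has $[G,G]\subseteq N$; write $\bar G=G/[N,N]$, let $A\subseteq\bar G$ be the image of $N$ and $\bar s\in\bar G$ the image of $s$. Then $A=N^{\ab}$ is abelian and normal in $\bar G$, conjugation by $N$ on $A$ is trivial (for $n,m\in N$ one has $nmn^{-1}m^{-1}\in[N,N]$), so $A$ carries a natural $\bar G/A\cong G/N$–module structure; let $\sigma\in\Aut(A)$ denote the automorphism induced by conjugation by $\bar s$, which generates this action. Because $[N,N]\subseteq[G,G]$, and because both $[s,N]\cdot[N,N]$ and $[S,N]\cdot[N,N]$ contain $[N,N]$ (take the trivial commutator) and are the full preimage in $G$ of $\{\,\sigma(a)a^{-1}\mid a\in A\,\}$ under $G\to\bar G$, while $[G,G]$ is the preimage of $[G,G]/[N,N]$, assertions (a) and (b) both follow once one shows
\[
[G,G]/[N,N]=(\sigma-1)\cdot A\qquad\text{as subgroups of }A ,
\]
where $(\sigma-1)\cdot A=\{\,\sigma(a)a^{-1}\mid a\in A\,\}$. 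Note that $(\sigma-1)\cdot A$ is the image of the (continuous) endomorphism $\sigma-1$ of $A$, hence a (closed) $\sigma$‑stable subgroup.

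The inclusion $(\sigma-1)\cdot A\subseteq[G,G]/[N,N]$ is clear, since $\sigma(a)a^{-1}=[\bar s,a]$ is the image of $[s,v]$ for any $v\in N$ lifting $a$. For the converse I would use $G=S\,N$ to write an arbitrary element of $\bar G$ as $\bar s^{\,i}a$ with $a\in A$ and $i\in\Z$ (resp.\ $i\in\Z_p$), and then expand the commutator of two such elements; using only that $A$ is abelian and that conjugation by $\bar s^{\,i}$ acts as $\sigma^{\,i}$, one gets, multiplicatively in $A$,
\[
[\bar s^{\,i}a,\bar s^{\,j}b]=\sigma^{\,i}(a)\,\sigma^{\,i+j}(b)\,\sigma^{\,i+j}(a)^{-1}\,\sigma^{\,j}(b)^{-1}=\sigma^{\,i}\bigl((1-\sigma^{\,j})\cdot a\bigr)\,\sigma^{\,j}\bigl((\sigma^{\,i}-1)\cdot b\bigr).
\]
For integral $i,j$ the operators $\sigma^{\,i}-1$ and $1-\sigma^{\,j}$ lie in $(\sigma-1)\Z[\sigma]$, so both factors lie in the $\sigma$‑stable subgroup $(\sigma-1)\cdot A$; in the pro‑$p$ case one passes to a limit, using that the map $i\mapsto\bar s^{\,i}a\bar s^{-i}$ is continuous on $\Z_p$ and that $(\sigma-1)\cdot A$ is closed, being the image of the compact group $A$ under a continuous map. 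Hence every commutator lies in $(\sigma-1)\cdot A$, so $[G,G]/[N,N]\subseteq(\sigma-1)\cdot A$, which gives the displayed identity and with it (a) and (b).

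The load‑bearing step is the reduction itself: once $[N,N]$ is killed, the lemma becomes the elementary but here essential fact that the commutator subgroup of an extension $1\to A\to\bar G\to C\to1$, with $C$ (pro‑)cyclic generated by an element acting on the abelian group $A$ as $\sigma$, equals $(\sigma-1)\cdot A$. The only point needing care is the pro‑$p$ bookkeeping — the normal form $\bar s^{\,i}a$ with $i\in\Z_p$ and the persistence of $(\sigma^{\,j}-1)\cdot A\subseteq(\sigma-1)\cdot A$ for $\Z_p$‑exponents — handled by the continuity and closedness remarks above. (Alternatively the identity follows from the five‑term exact sequence in (continuous) group homology of $1\to A\to\bar G\to C\to1$ together with $H_2(C)=0$ for $C$ (pro‑)cyclic; but the direct commutator calculation is shorter and in keeping with the spirit of this section.)
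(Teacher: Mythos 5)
Your proof is correct and is essentially the paper's argument in module-theoretic dress: the paper performs the same commutator calculus directly in $G$ modulo $[N,N]$, where your identity $\sigma^{k}-1\in(\sigma-1)\Z[\sigma]$ appears as the induction $[s^{k},v]\in[s,N]\,[N,N]$, your observation that $(\sigma-1)\cdot A$ is the image of an endomorphism of the abelian group $A$ appears as the verification that $[s,N]\,[N,N]/[N,N]$ is a subgroup, and the pro-$p$ case is likewise handled by a density/closedness argument. No substantive difference in route, only in packaging.
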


\begin{proof}
As $G/N$ is abelian, one has that $[G,G]\subseteq N$,
i.e., $[G,G]/[N,N]$ is abelian.
From the
commutator identities (cf. \cite{rob:grp})
\begin{equation}
\label{eq:commid1}
[ab,c]={}^a[b,c]\cdot[a,c]\qquad\text{and}\qquad
[a,bc]=[a,b]\cdot {}^b[a,c],\qquad a,b,c\in G,
\end{equation}
one concludes that for $t_1,t_2\in S$, $v_1,v_2\in N$ one has
\begin{equation}
\label{eq:commid2}
[t_1v_1,t_2v_2]={}^{t_1}[v_1,t_2]\cdot {}^{t_1t_2}[v_1,v_2]\cdot
{}^{t_2}[t_1,v_2]\in [^{t_1}v_1,t_2]\cdot[t_1,{}^{t_2}v_2]\,[N,N].
\end{equation}
Here we used the fact that $[t_1,t_2] \in [S,S]=\triv$. This yields (a).
From the second identity in \eqref{eq:commid1} one concludes that
$[s,N]\,[N,N]/[N,N]\subseteq [G,G]/[N,N]$
is a (closed) subgroup satisfying 
\begin{equation}
\label{eq:commid4}
[s,v_1]\, [N,N] \cdot [s,v_2]\,[N,N]=[s,v_1v_2]\,[N,N].
\end{equation}
As $[v,t]=[t,v]^{-1}$ for $t\in S$, $v\in N$, 
in order to prove (b) it suffices to show that
$[t,v]\in [s,N]\,[N,N]$ for all $t\in S$, $v\in N$. From the first identity in
\eqref{eq:commid1} one concludes that
\begin{equation}
\label{eq:commid5}
[s^k,v] = [s^{k-1},{}^sv]\cdot [s,v]
\end{equation}
and, by induction, that $[s^k,v]\in [s,N]\,[N,N]$ for all $k \geq 0$ and $v\in N$.
By the first identity in \eqref{eq:commid1}, one has
\begin{equation}
\label{eq:commid6}
[s^{-1},v] = [s,{}^{s^{-1}}v]^{-1}\in [s,N]\,[N,N]
\end{equation}
and 
\begin{equation}
\label{eq:commid7}
[s^{-k},v] = [s^{1-k},{}^{s^{-1}}v]\cdot [s^{-1},v]
\end{equation} 
for all $k\geq 1$ and $v\in V$.  
Hence, by induction, $[s^k,N]\subseteq [s,N]\,[N,N]$
for all $k\in\Z$. This yields the claim if $G$ is discrete.
If $G$ is pro-$p$, then $(\bigcup_{k\in\Z}[s^k,N]) [N,N]/[N,N]$
is dense in $[S,N]\,[N,N]/[N,N]$, and $[s,N]\,[N,N]/[N,N]$
is closed. This yields the claim in the pro-$p$ case.
\end{proof}


\section{Cohomological Mackey functors}
\label{s:cMF}
Let $G$ be a finite group. A {\it Mackey system} of $G$ is a set of subgroups
of $G$ which is closed under conjugation and intersection, e.g.,
the set $G^\sharp$ of all subgroups of $G$, the set $G^\natural$ of all
normal subgroups of $G$ and $G^\circ=\{\triv,G\}$ are Mackey systems of
$G$. Let $\caM$ be a Mackey system of $G$.
A {\it cohomological $\caM$-Mackey functor} $\boX$ with values in the
category of abelian groups is a collection of abelian groups $\boX_U$,
$U\in\caM$, together with a collection of group homomorphisms
\begin{equation}
\label{eq:cMFdef}
i_{U,V}^\boX\colon \boX_U\to\boX_V,\quad
t_{V,U}^\boX\colon\boX_V\to\boX_U,\qquad
c_{g,U}^\boX\colon \boX_U\to\boX_{{}^gU},
\end{equation}
for $U,V\in\caM$, $V\subseteq U$ and $g\in G$,
which satisfy certain identities (cf. \cite[\S 3.1]{blth:cyc}).
Moreover, if $V$ is normal in $U$, then $\boX_V$ carries naturally
the structure of a left $\Z[U/V]$-module, and - considering
$\boX_U$ as trivial $\Z[U/V]$-module - the mappings
$i_{U,V}^\boX$ and $t_{V,U}^\boX$ are homomorphisms of
left $\Z[U/V]$-modules. 
A homomorphism of cohomological $\caM$-Mackey functors
$\eta\colon\boX\to\boY$ is a collection $(\eta_U)_{U\in\caM}$
of group homomorphism $\eta_U\colon\boX_U\to\boY_U$
which commute with the maps defined in \eqref{eq:cMFdef}.
The category of cohomological $\caM$-Mackey functors $\cMF_\caM(\Zmod)$
with values in the category of abelian groups
coincides with the category of contravariant additive
functors on some additive category of $\Z[G]$-permutation modules.
Therefore, it is an abelian category.
For further details the reader may wish to consult
\cite{bouc:mc}, \cite{blth:cyc} or \cite{webb:mc}.


\subsection{The cohomological Mackey functor $\Ab$}
\label{ss:Ab}
Let $G$ be a (pro-$p$) group, and let $N$ be a (closed) normal subgroup of finite index.
Then $\Ab$ is the cohomological $(G/N)^\sharp$-Mackey functor
with values in the category of abelian groups (resp. abelian pro-$p$ groups)
given by $\Ab_U=U^{\ab}$. Moreover, for $N\subseteq V\subseteq U\subseteq G$,
the map $t_{V,U}^{\Ab}\colon V^{\ab}\to U^{\ab}$ is just the canonical map,
while $i_{U,V}^{\Ab}\colon U^{\ab}\to V^{\ab}$ coincides with the transfer.
For further details see \cite[\S3.1]{thw:fratt}  and \cite[\S 3.8]{thw:prdim}.


\subsection{Section cohomology groups}
\label{ss:seccoh}
Let $G$ be a finite group,
and let $\boX$ be a cohomological $G^\circ$-Mackey functor, where $G^\circ=\{\triv,G\}$. 
We denote by
\begin{equation}
\label{eq:seccoh1}
\begin{aligned}
\boc_0(G,\boX)&=\coker(t^\boX_{\triv,G}),& \bok^0(G,\boX)&=\kernel(i^\boX_{G,\triv}),\\
\boc_1(G,\boX)&=\kernel(t^\boX_{\triv,G})/\omega_{G}\cdot \boX_{\triv},&\bok^1(G,\boX)&=\boX_{\triv}^G/\image(i^\boX_{G,\triv}),
\end{aligned}
\end{equation}
where $\omega_G=\kernel(\ZG\to\Z)$ denotes the augmentation ideal, its {\it section cohomology groups}.
One has a (canonical) 6-term exact sequence
\begin{equation}
\label{eq:6term}
\xymatrix{
0\ar[r]&\boc_1(G,\boX)\ar[r]&\hH^{-1}(G,\boX_{\triv})\ar[r]&
\bok^0(G,\boX)\ar[d]\\
0&\bok^1(G,\boX)\ar[l]&\hH^{0}(G,\boX_{\triv})\ar[l]&\boc^0(G,\boX)\ar[l]\\
}
\end{equation}
of abelian groups (cf. \cite[Prop.~4.1(a)]{blth:cyc}, \cite[Prop.~2.1]{thw:fratt}).  
Here $\hH^k(G,\argu)$ denotes Tate cohomology (cf. \cite[Chap.~VI.4]{brown:coh}).
From the identity $t^\boX_{\triv,G}\circ i^\boX_{G,\triv}=|G|\cdot\iid_{\boX_G}$
(cf. \cite[\S 3.1]{blth:cyc}) follows that $|G|\cdot\bok^0(G,\boX)=0$
and $|G|\cdot\boc_0(G,\boX)=0$.
In particular, if $\boX_G$ is torsion free, then $\bok^0(G,\boX)=0$. 
Since $|G|\cdot\hH^{k}(G,\boX_{\triv})=0$ for $k\in\Z$,
one concludes from \eqref{eq:6term} that $\boc_1(G,\boX)$ and $\bok^1(G,\boX)$ 
satisfy the same relation. In particular, if $\boX_G$ and $\boX_{\triv}$
are finitely generated $\Z$-modules (resp. $\Z_p$-modules), then
all the groups defined in \eqref{eq:seccoh1} are finite.
Applied to co-cyclic normal subgroups one has obtains the following.

\begin{prop}
\label{prop:h90}
Let $G$ be a (pro-$p$) group, and let 
$N$ be a (closed) co-cyclic normal subgroup of finite index in $G$.
Then
\begin{equation}
\label{eq:seccoh2}
\begin{aligned}
\boc_0(G/N,\Ab)&\simeq G/N,& \bok^0(G/N,\Ab)&=\tk(G/N),\\
\boc_1(G/N,\Ab)&=0,&\bok^1(G/N,\Ab)&=\tc(G/N).
\end{aligned}
\end{equation}
\end{prop}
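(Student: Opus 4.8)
The plan is to unwind the four definitions in \eqref{eq:seccoh1}, applied to the finite cyclic group $G/N$ and to the restriction of the Mackey functor $\Ab$ to the Mackey system $(G/N)^\circ$. Here the ``trivial object'' of $(G/N)^\sharp$ is the subgroup $N/N$, so $\Ab_{\triv}=N^{\ab}$ and $\Ab_{G/N}=G^{\ab}$; by the description of $\Ab$ recalled in \S\ref{ss:Ab} (see also \cite[\S3.1]{thw:fratt}) the map $t^{\Ab}_{\triv,G/N}$ is the canonical map $t_{N,G}\colon N^{\ab}\to G^{\ab}$, while $i^{\Ab}_{G/N,\triv}\colon G^{\ab}\to N^{\ab}$ is the transfer $i_{G,N}$ of \eqref{eq:trans}. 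Granting this dictionary, $\bok^0(G/N,\Ab)=\kernel(i_{G,N})=\tk(G/N)$ is immediate from \eqref{eq:tk}, and $\boc_1(G/N,\Ab)=0$ is precisely the last assertion of Theorem~A; so only $\boc_0$ and $\bok^1$ require an additional remark.

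For $\boc_0$ I would argue that $\image(t_{N,G})=N\,[G,G]/[G,G]$, and since $G/N$ is abelian one has $[G,G]\subseteq N$, hence $N\,[G,G]=N$ and $\boc_0(G/N,\Ab)=\coker(t_{N,G})\simeq G/N$. For $\bok^1$ I would use that, again by \S\ref{ss:Ab}, the transfer $i_{G,N}$ is a homomorphism of left $\Z_\bullet[G/N]$-modules into the trivial module $G^{\ab}$; therefore $\image(i_{G,N})\subseteq (N^{\ab})^{G/N}$, the transfer coincides with the map $i^\circ_{G/N}$ of \eqref{eq:tc} after restricting its codomain to the $G/N$-invariants, and consequently $\bok^1(G/N,\Ab)=(N^{\ab})^{G/N}/\image(i_{G,N})=\coker(i^\circ_{G/N})=\tc(G/N)$.

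To spell out $\boc_1(G/N,\Ab)=0$ without quoting Theorem~A verbatim, I would write $\bar s$ for the image in $G/N$ of an element $s$ as in Theorem~A, so $G/N=\langle\bar s\rangle$; then $\omega_{G/N}=(\bar s-1)\,\Z_\bullet[G/N]$ because $G/N$ is cyclic, and since $N^{\ab}$ is a $\Z_\bullet[G/N]$-module via conjugation this gives $\omega_{G/N}\cdot N^{\ab}=(\bar s-1)\cdot N^{\ab}=[s,N]\,[N,N]/[N,N]$, which by Theorem~A equals $\kernel(t_{N,G})=\kernel(t^{\Ab}_{\triv,G/N})$; dividing yields $\boc_1=0$. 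None of these steps is technically hard. The only real point of care will be bookkeeping: keeping the Mackey-functor maps and the shifted role of $\triv$ (the subgroup $N$, not $\{1\}$) consistently matched with the classical transfer and with its image in the invariants, since all the genuine content — Theorem~A together with the identification of $i^{\Ab}$ with the transfer — is already available.
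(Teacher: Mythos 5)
Your proof is correct and follows essentially the same route as the paper: the identities for $\bok^0$ and $\bok^1$ are definition-unwinding once one matches the Mackey-functor maps of $\Ab$ with the canonical map and the transfer, $\boc_0\simeq G/N$ comes from $[G,G]\subseteq N$, and $\boc_1=0$ is exactly the reformulation of Theorem~A (i.e.\ Lemma~\ref{lem:cycsec}(b)) via $\omega_{G/N}\cdot N^{\ab}=(\bar s-1)\cdot N^{\ab}=[s,N]\,[N,N]/[N,N]=\kernel(t_{N,G})$. You merely spell out the bookkeeping that the paper leaves implicit.
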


\begin{proof}
The statements for $\bok^0(G/N,\Ab)$ and $\bok^1(G/N,\Ab)$
are just the definition, while $\boc_0(G/N,\Ab)\simeq G/N$
follows from the fact that $[G,G]\subseteq N$.
Moreover, $\boc_1(G/N,\Ab)=0$ is a sophisticated reformulation
of Lemma~\ref{lem:cycsec}(b).
\end{proof}


\subsection{The Euler characteristic}
\label{ss:euler}
Let $G$ be a finite cyclic group, and let $\boB$
be the cohomological $G^\circ$-Mackey functor, $G^\circ=\{\triv,G\}$,
satisfying $\boB_G=\Z/|G|\cdot\Z$ and $\boB_{\triv}=0$.
Then $\boB$ has a projective resolution of length $3$ in $\cMF_{G^\circ}(\Zmod)$,
and one has natural isomorphisms
\begin{equation}
\label{eq:seccoh3}
\begin{aligned}
\bok^0(G,\argu)&\simeq\Ext^0(\boB,\argu),&\bok^1(G,\argu)&\simeq\Ext^1(\boB,\argu),& \\
\boc_1(G,\argu)&\simeq\Ext^2(\boB,\argu),&\boc_0(G,\argu)&=\Ext^3(\boB,\argu),
\end{aligned}
\end{equation}
where $\Ext^\bullet(\argu,\argu)$ denote the right derived functors of the homomorphisms functor
in $\cMF_{G^\circ}(\Zmod)$.
In particular, if
$0\rightarrow\boX\to\boY\to\boZ\to 0$ is a short exact sequence in $\cMF_{G^\circ}(\Zmod)$, then
one has a 12-term exact sequence
\begin{equation}
\label{eq:12term}
\xymatrix{
0\ar[r] & \bok^0(G,\boX)\ar[r] & \bok^0(G,\boY)\ar[r] & \bok^0(G,\boZ)\ar`r[d]`[l] `[dlll] `[dll] [dll] & \\
& \bok^1(G,\boX)\ar[r] & \bok^1(G,\boY)\ar[r] & \bok^1(G,\boZ)\ar`r[d]`[l] `[dlll] `[dll] [dll] &\\
& \boc^1(G,\boX)\ar[r] & \boc^1(G,\boY)\ar[r] & \boc^1(G,\boZ)\ar`r[d]`[l] `[dlll] `[dll] [dll] &\\
& \boc^0(G,\boX)\ar[r] & \boc^0(G,\boY)\ar[r] & \boc^0(G,\boZ)\ar[r] & 0}
\end{equation}
(cf. \cite[\S 4.1]{blth:cyc}). For a cohomological $G^\circ$-Mackey functor $\boX$
with values in the category of finitely generated $\Z$-modules (resp. $\Z_p$-modules)
one defines the {\it Euler characteristic} $\chi_G(\boX)$ of $\boX$ by
\begin{equation}
\label{eq:euler}
\chi_G(\boX)=\frac{|\bok^0(G,\boX)|\cdot |\boc_1(G,\boX)|}{|\bok^1(G,\boX)|\cdot|\boc_0(G,\boX)|}.
\end{equation}
Thus from \eqref{eq:12term} one concludes that for a short exact sequence
$0\rightarrow\boX\to\boY\to\boZ\to 0$ of cohomological $G^\circ$-Mackey functors
with values in the category of finitely generated $\Z$-modules (resp. $\Z_p$-modules)
one has
\begin{equation}
\label{eq:eulermult}
\chi_G(\boY)=\chi_G(\boX)\cdot\chi_G(\boZ).
\end{equation}
From Proposition~\ref{prop:h90} one concludes the following (cf. \eqref{eq:trat}).

\begin{prop}
\label{prop:euler}
Let $G$ be a finitely generated (pro-$p$) group, and let 
$N$ be a (closed) co-cyclic normal subgroup of finite index in $G$.
Then 
\begin{equation}
\label{eq:eucc}
\chi_{G/N}(\Ab)=\frac{\rho(G/N)}{|G:N|}.
\end{equation}
\end{prop}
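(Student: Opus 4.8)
The plan is to deduce \eqref{eq:eucc} directly from Proposition~\ref{prop:h90} together with the definition \eqref{eq:euler} of the Euler characteristic and the definition \eqref{eq:trat} of the transfer ratio; essentially no new computation is involved, since all the substantive content has already been absorbed into the vanishing $\boc_1(G/N,\Ab)=0$.

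The one point that deserves a line of care is that $\chi_{G/N}(\Ab)$ is defined in the first place. Restricting the $(G/N)^\sharp$-Mackey functor $\Ab$ to the Mackey system $(G/N)^\circ=\{\triv,G/N\}$ one has $\boX_{\triv}=N^{\ab}$ and $\boX_{G/N}=G^{\ab}$. Since $G$ is finitely generated, $G^{\ab}$ is a finitely generated $\Z_\bullet$-module; and since $N$ is a (closed) subgroup of finite index in $G$ — hence an open subgroup in the pro-$p$ case — $N$ is finitely generated as well, so $N^{\ab}$ is a finitely generated $\Z_\bullet$-module. Thus $\Ab$ restricts to a cohomological $(G/N)^\circ$-Mackey functor with values in finitely generated $\Z_\bullet$-modules, and by the discussion following \eqref{eq:6term} all four groups in \eqref{eq:euler} are finite, so $\chi_{G/N}(\Ab)\in\Q^\times$ is well defined.

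Next I would simply substitute the values supplied by Proposition~\ref{prop:h90}, namely $\bok^0(G/N,\Ab)=\tk(G/N)$, $\bok^1(G/N,\Ab)=\tc(G/N)$, $\boc_0(G/N,\Ab)\simeq G/N$ and $\boc_1(G/N,\Ab)=0$, into \eqref{eq:euler}. Using $|\boc_1(G/N,\Ab)|=1$ and $|\boc_0(G/N,\Ab)|=|G:N|$ this gives
\begin{equation*}
\chi_{G/N}(\Ab)=\frac{|\tk(G/N)|\cdot|\boc_1(G/N,\Ab)|}{|\tc(G/N)|\cdot|\boc_0(G/N,\Ab)|}=\frac{|\tk(G/N)|}{|\tc(G/N)|\cdot|G:N|},
\end{equation*}
and recalling $\rho(G/N)=|\tk(G/N)|/|\tc(G/N)|$ from \eqref{eq:trat}, the right-hand side is precisely $\rho(G/N)/|G:N|$, which is \eqref{eq:eucc}.

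I do not expect any genuine obstacle: the work has been done in Theorem~A / Lemma~\ref{lem:cycsec}(b), whose ``sophisticated reformulation'' is the equality $\boc_1(G/N,\Ab)=0$ used above, and in the identification of $\bok^0,\bok^1,\boc_0$ in Proposition~\ref{prop:h90}. The only thing to watch is the bookkeeping ensuring $N^{\ab}$ is finitely generated, so that the Euler characteristic is legitimate.
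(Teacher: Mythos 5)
Your proof is correct and is exactly the argument the paper intends: the paper states Proposition~\ref{prop:euler} as an immediate consequence of Proposition~\ref{prop:h90} together with the definitions \eqref{eq:euler} and \eqref{eq:trat}, which is precisely your substitution. The extra remark verifying that $G^{\ab}$ and $N^{\ab}$ are finitely generated $\Z_\bullet$-modules (so that the Euler characteristic is defined) is a reasonable piece of bookkeeping that the paper leaves implicit.
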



\subsection{The Herbrand quotient}
\label{ss:herb}
As we have done before we will treat two cases simultaneously. 
We will either assume that $G$ is a finite cyclic group and $\Z_\bullet=\Z$,
or that $G$ is a finite cyclic $p$-group and $\Z_\bullet=\Z_p$.
If the second case applies, we will just add in parenthesis (...) the additional
hypothesis one has to make.

Let $G$ be a finite cyclic (p-)group, and let $M$ be a finitely generated
$\ZbG$-module. The rational number
\begin{equation}
\label{eq:herb1}
h(G,M)=\frac{|\hH^0(G,M)|}{|\hH^{-1}(G,M)|},
\end{equation}
is called the {\it Herbrand quotient} of the $\ZbG$-module $M$.
In particular, if $\boX$ is a cohomological $G^\circ$-Mackey functor
with values in the category of finitely generated $\Z_\bullet$-modules
the 6-term exact sequence \eqref{eq:6term}
implies that
\begin{equation}
\label{eq:euler2}
\chi_G(\boX)=\frac{|\bok^0(G,\boX)|\cdot |\boc_1(G,\boX)|}{|\bok^1(G,\boX)|\cdot|\boc_0(G,\boX)|}=h(G,\boX_{\triv})^{-1}.
\end{equation}
Let $\Q_\bullet$ denote the quotient field of $\Z_\bullet$.
For a finitely generated $\Z_\bullet$-module $M$ we put $M_{\Q_\bullet}=M\otimes_{\Z_\bullet}\Q_\bullet$.
The Herbrand quotient has the following well known properties.

\begin{prop}
\label{prop:herb}
Let $G$ be a finite cyclic ($p$-)group, and let $M$ be a finitely generated
$\ZbG$-module.
\begin{itemize}
\item[(a)] If $0\to A\to B\to C\to 0$ is a short exact sequence of 
$\ZbG$-modules, then 
\begin{equation}
\label{eq:multh}
h(G,B)=h(G,A)\cdot h(G,C).
\end{equation}
\item[(b)] If $M$ is a finite $\ZbG$-module, then $h(G,M)=1$.
\item[(c)] If $G$ is of prime order $p$, then one has
\begin{equation}
\label{eq:logh}
\dim_{\Q_\bullet}(M_{\Q_\bullet})=p\cdot\dim_{\Q_\bullet}(M^G_{\Q_\bullet})+(1-p)\cdot\log_p(h(G,M)).
\end{equation}
\end{itemize}
\end{prop}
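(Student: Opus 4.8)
The plan is to prove parts~(a), (b) and (c) in turn, reducing the formula in~(c) to a short computation of two explicit Herbrand quotients. For part~(a): since $G$ is finite cyclic, Tate cohomology $\hH^\bullet(G,-)$ is $2$-periodic, so the long exact Tate sequence attached to $0\to A\to B\to C\to 0$ wraps around into an exact hexagon on the six groups $\hH^0(G,A)$, $\hH^0(G,B)$, $\hH^0(G,C)$, $\hH^{-1}(G,A)$, $\hH^{-1}(G,B)$, $\hH^{-1}(G,C)$. Each of these is finitely generated over $\Z_\bullet$ (being a subquotient of the cohomology of a finite free resolution evaluated on a finitely generated module) and is annihilated by $|G|$, hence finite; and for an exact cyclic sequence of finite abelian groups the alternating product of the orders equals $1$. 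Reading off this relation and rearranging gives $h(G,B)=h(G,A)\cdot h(G,C)$.

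For part~(b), write $G=\langle\sigma\rangle$, put ${}_N M=\kernel(N_G\colon M\to M)$ with $N_G=\sum_{g\in G}g$, and compare the two endomorphisms $N_G$ and $\sigma-1$ of the finite module $M$. One has $N_GM\subseteq M^G$ and $(\sigma-1)M\subseteq{}_N M$, while the identity $|M|=|\kernel(\phi)|\cdot|\image(\phi)|$ applied to $\phi\in\{N_G,\sigma-1\}$ gives $|M^G|\cdot|(\sigma-1)M|=|{}_N M|\cdot|N_GM|$. Dividing yields $|\hH^0(G,M)|=|M^G|/|N_GM|=|{}_N M|/|(\sigma-1)M|=|\hH^{-1}(G,M)|$, so $h(G,M)=1$.

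For part~(c), the first observation is that $h(G,-)$ is an \emph{isogeny invariant}: if $f\colon M\to M'$ is a $\ZbG$-homomorphism with finite kernel and finite cokernel, then factoring $f$ through its image and applying~(a) and~(b) gives $h(G,M)=h(G,M')$. Consequently $h(G,M)$ depends only on the $\Q_\bullet[G]$-module $M_{\Q_\bullet}$, since $M$ is isogenous to $M/M_{\tor}$ and any two $\ZbG$-lattices with isomorphic rationalizations are isogenous. Now $|G|=p$ is invertible in $\Q_\bullet$, so $\Q_\bullet[G]\cong\Q_\bullet\times\Q_\bullet(\zeta_p)$ is semisimple with exactly two simple modules, the trivial module $\Q_\bullet$ and a module $V$ with $\dim_{\Q_\bullet}(V)=p-1$ and $V^G=0$; writing $M_{\Q_\bullet}\simeq\Q_\bullet^{a}\oplus V^{m}$ one has $a=\dim_{\Q_\bullet}(M^G_{\Q_\bullet})$ and $\dim_{\Q_\bullet}(M_{\Q_\bullet})=a+(p-1)m$.

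It remains to compute. The $\ZbG$-lattice $\Z_\bullet^{a}\oplus I_G^{m}$, where $I_G=\kernel(\ZbG\to\Z_\bullet)$ is the augmentation ideal, has the same rationalization as $M$ (note $(I_G)_{\Q_\bullet}\simeq V$ by the augmentation sequence), so isogeny invariance together with~(a) gives $h(G,M)=h(G,\Z_\bullet)^{a}\cdot h(G,I_G)^{m}$. Here $h(G,\Z_\bullet)=p$ (the norm acts on $\Z_\bullet$ as multiplication by $p$, whence $\hH^0(G,\Z_\bullet)=\Z_\bullet/p\Z_\bullet$ and $\hH^{-1}(G,\Z_\bullet)=0$), and $h(G,\ZbG)=1$ since $\ZbG$ is free, hence cohomologically trivial; feeding the latter into $0\to I_G\to\ZbG\to\Z_\bullet\to 0$ and using~(a) gives $h(G,I_G)=1/p$. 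Hence $h(G,M)=p^{a-m}$, and $p\cdot\dim_{\Q_\bullet}(M^G_{\Q_\bullet})+(1-p)\log_p(h(G,M))=pa+(1-p)(a-m)=a+(p-1)m=\dim_{\Q_\bullet}(M_{\Q_\bullet})$, which is \eqref{eq:logh}. None of the steps is a serious obstacle; the one place that genuinely uses the hypothesis of \emph{prime} order rather than mere cyclicity is the semisimplicity of $\Q_\bullet[G]$, i.e.\ the invertibility of $p$ in $\Q_\bullet$ (valid identically for $\Q_\bullet=\Q$ and $\Q_\bullet=\Q_p$), so that step, together with the finiteness bookkeeping in~(a), is where I would take the most care.
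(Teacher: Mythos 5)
Your proposal is correct, and for part~(c) it takes a genuinely different route from the paper. Parts~(a) and~(b) you prove by the standard arguments (the exact hexagon from $2$-periodicity, and the $|M|=|\kernel\phi|\cdot|\image\phi|$ count for $\phi\in\{N_G,\sigma-1\}$); the paper simply cites Neukirch for these, so nothing to compare there. For~(c), the paper first reduces to a torsion-free module, base-changes to $\Z_p$, and then invokes Diederichsen's classification of $\Z_p[G]$-lattices for $G$ cyclic of order $p$ (every lattice is $\Z_p^r\oplus\Omega^s\oplus\Z_p[G]^t$ with $\Omega$ the augmentation ideal), from which $\log_p(h(G,M))=r-s$ and the dimension count are immediate. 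You instead establish isogeny invariance of $h(G,\argu)$ from (a) and (b), observe that $h(G,M)$ therefore depends only on the rationalization $M_{\Q_\bullet}$, and use the semisimple decomposition $\Q_\bullet[G]\simeq\Q_\bullet\times\Q_\bullet(\zeta_p)$ to replace $M$ by $\Z_\bullet^a\oplus I_G^m$, whose Herbrand quotient is computed directly. The trade-off: the paper's route is a one-line computation once the (nontrivial) integral classification theorem is granted, while yours is self-contained, needs only rational representation theory of $C_p$ (Maschke plus irreducibility of $\Phi_p$ over $\Q$ and over $\Q_p$), and works uniformly for $\Z$ and $\Z_p$ without the $\otimes_{\Z}\Z_p$ reduction step. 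All the individual steps in your argument check out, including the identities $h(G,\Z_\bullet)=p$ and $h(G,I_G)=1/p$ and the final arithmetic $pa+(1-p)(a-m)=a+(p-1)m$.
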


\begin{proof}
For (a) and (b) see \cite[Chap.~IV, Thm.~7.3]{neu:alg}.
Part (c) can be found already in \cite[Chap.~IV.7, Ex.~3]{neu:alg}, but for the convenience
of the reader we give an alternative (and maybe simplier) proof here.
By (b), we may assume that $M$ is a torsion free $\Z_\bullet$-module.
By applying $\argu\otimes_{\Z_\bullet}\Z_p$ if necessary, we may assume that
$M$ is a $\Z_p[G]$-lattice. Hence, by a theorem of F.-E.~Diederichsen (cf. \cite[Thm.~34:31]{cr:mr1}, \cite{died:cp}),
there exist non-negative integers $r$, $s$ and $t$ such that
\begin{equation}
\label{eq:died}
M\simeq \Z_p^r\oplus \Omega^s\oplus\Z_p[G]^t,
\end{equation}
where $\Omega=\kernel(\Z_p[G]\to\Z_p)$ is the augmentation ideal, i.e., 
$\log_p(h(G,M))=r-s$. Thus from the equations
\begin{equation}
\label{eq:died2}
\begin{aligned}
\dim_{\Q_\bullet}(M_{\Q_\bullet})&=r+s\cdot (p-1)+t\cdot p,\\
\dim_{\Q_\bullet}(M_{\Q_\bullet}^G)&=r+t,
\end{aligned}
\end{equation}
one deduces the claim.
\end{proof}

\section{Conclusions}
\label{s:conc}

\subsection{Theorem B and Theorem B$^\prime$}
\label{ss:thmb}
{\it Proof.}
Let $G$ be a finitely generated (pro-$p$) group,
and let $N$ be a (closed) co-cyclic normal subgroup
of finite index. We assume further that $H^{\ab}$ is torsion
free for any subgroup $H$ of $G$ satisfying $N\subseteq H\subseteq G$.
Then, by hypothesis, $N^{\ab}$ is a $\Z_\bullet[G/N]$-lattice, but also a
$\Z_\bullet[H/N]$-lattice for
any subgroup $H$ of $G$ satisfying $N\subseteq H\subseteq G$.
As $\tk(H/N)=\kernel(i_{H,N}^{\Ab})=\bok^0(H/N,\Ab)$ is a finite
subgroup of $H^{\ab}$, one has $\tk(H/N)=0$ (cf. \S\ref{ss:seccoh}).
By the group theoretical version of Hilbert's theorem 90 (cf. Thm.~A), one has $\boc_1(H/N,\Ab)=0$
(cf. Prop.~\ref{prop:h90}). Hence from the 6-term exact sequence \eqref{eq:6term},
one concludes that $\hH^{-1}(H/N,N^{\ab})=0$. As $H/N$ is cyclic, Tate cohomology $\hH^\bullet(H/N,\argu)$
is periodic of period $2$ (cf. \cite[Chap.~VI.9, Ex.~9.2]{brown:coh}), i.e., one has $H^1(H/N,N^{\ab})=0$ for all subgroups $H/N$ of 
$G/N$. Now we have to consider the discrete case and the pro-$p$ case separetely.

If $G$ is discrete, the $\Z[G/N]$-lattice $N^{\ab}$ satisfies $H^1(H/N,N^{\ab})=0$ for all subgroups $H/N$ of $G/N$.
Thus, by \cite[Thm.~1.5]{enmi:int1}, $N^{\ab}$ is a pseudo $\Z[G/N]$-permutation module completing the proof of Theorem~B.

If $G$ is a pro-$p$ group, $N^{\ab}$ is a $\Z_p[G/N]$-lattice satifying $H^1(H/N,N^{\ab})=0$
for all subgroups $H/N$ of $G/N$. Hence, by \cite[Thm.~A]{blth:cyc}, $N^{\ab}$ is a 
$\Z_p[G/N]$-permutation module showing Theorem~B$^\prime$.
\hfill\qedsymbol

\subsection{Theorem C}
\label{ss:thmC}
{\it Proof.}
By hypthesis, the cohomological $(G/N)^\sharp$-Mackey functor
$\Ab$ has values in the category of finite abelian groups.
Hence $\chi_{G/N}(\Ab)=1$ (cf. \eqref{eq:euler} and Prop.~\ref{prop:herb}(b)).
The conclusion then follows from Proposition~\ref{prop:euler}.
\hfill\qedsymbol
\vskip6pt

Theorem~C can be easily translated in the original context of Hilbert's theorem~94 (cf. \cite[Thm.~94]{hil:alg}).
For a {\it number field} $K$ and a set of places $S\subset\caV(K)$
containing all infinite places we denote by $\caO^S(K)$ the {\it ring of $S$-integers},
and by $\eucl(\caO^S(K))$ its {\it ideal class group}. Thus, if $L/K$ is a finite Galois extension
of number fields with {\it Galois group} $G=\Gal(L/K)$ one has a canonical map
\begin{equation}
\label{eq:cap1}
i^S_{K,L}\colon \eucl(\caO^S(K))\longrightarrow\eucl(\caO^S(L))^G.
\end{equation}
Moreover, $\kernel(i^S_{K,L})$ is called the
{\it $S$-capitulation kernel}, and 
$\coker(i^S_{K,L})$ the {\it $S$-capitulation cokernel}\footnote{Our definition here might differ from the
definition used in \cite{ngu:cocap}.}
of the finite Galois extension $L/K$.
Theorem~C implies the following strong form of Hilbert's theorem 94.

\begin{thm}[Hilbert's theorem 94]
\label{thm:h94}
Let $K$ be a number field, let $S\subset\caV(K)$ be a finite
set of places containing all infinite places, and let $L/K$ be a cyclic
Galois extension, which is unramified at all places outset of $S$ and
completely split at all places in $S$. 
Then
\begin{equation}
\label{eq:cap}
|\kernel(i_{K,L}^S)|=|L:K|\cdot |\coker(i_{K,L}^S)|.
\end{equation} 
In particular, $|L:K|$ divides $|\kernel(i_{K,L}^S)|$.
\end{thm}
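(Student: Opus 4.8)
The plan is to recast the statement in the group-theoretic language of Theorem~C by means of class field theory, so that \eqref{eq:cap} becomes \eqref{eq:tri}. Write $G=\Gal(L/K)$, and let $H_S(K)$ and $H_S(L)$ be the extensions of $K$ and of $L$ determined by the Artin isomorphisms $\Gal(H_S(K)/K)\cong\eucl(\caO^S(K))$ and $\Gal(H_S(L)/L)\cong\eucl(\caO^S(L))$; concretely $H_S(K)$ is the maximal abelian extension of $K$ which is unramified outside $S$ and completely split at all places of $S$, and $H_S(L)$ is the analogous object for $L$ relative to the set $S_L$ of places of $L$ above $S$, the second Artin isomorphism being equivariant for the natural actions of $G$. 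Since by hypothesis $L/K$ is itself unramified outside $S$ and completely split at $S$, the extension $H_S(L)/K$ is again Galois, unramified outside $S$, and completely split at $S$: the conditions defining $H_S(L)$ only involve the underlying places and are therefore stable under $\Gal(L/K)$. As $\eucl(\caO^S(L))$ is finite, $H_S(L)/K$ is a finite extension; put $\tilde G=\Gal(H_S(L)/K)$ and $N=\Gal(H_S(L)/L)$, so that $\tilde G$ is a finite (hence FAb) group and $N$ is a normal subgroup with $\tilde G/N\cong G$ cyclic, i.e.\ $N$ is co-cyclic of finite index.

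First I would verify the two identifications $N^{\ab}=N\cong\eucl(\caO^S(L))$ and $\tilde G^{\ab}\cong\eucl(\caO^S(K))$. The first is immediate since $N$ is abelian. For the second, let $M_0$ be the maximal subextension of $H_S(L)/K$ that is abelian over $K$, so $\tilde G^{\ab}=\Gal(M_0/K)$; then $M_0/K$ is abelian, unramified outside $S$, and completely split at $S$, whence $M_0\subseteq H_S(K)$, while conversely the compositum $H_S(K)\cdot L$ is abelian over $L$, unramified outside $S_L$, and completely split at $S_L$, so that $H_S(K)\subseteq H_S(K)\cdot L\subseteq H_S(L)$ and therefore $H_S(K)\subseteq M_0$. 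Thus $M_0=H_S(K)$ and $\tilde G^{\ab}\cong\Gal(H_S(K)/K)\cong\eucl(\caO^S(K))$.

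Next I would match the capitulation map with the transfer. Under the Artin isomorphisms just described, the canonical extension-of-ideals map $i^S_{K,L}\colon\eucl(\caO^S(K))\to\eucl(\caO^S(L))^G$ corresponds to the transfer $i_{\tilde G,N}\colon\tilde G^{\ab}\to N^{\ab}=N$ (whose image automatically lies in $N^{\tilde G/N}$) — this is the classical compatibility of the reciprocity map with the Verlagerung that underlies Artin's reduction of the principal ideal theorem to group theory (cf.\ \cite{neu:alg}). Consequently $\kernel(i^S_{K,L})\cong\tk(\tilde G/N)$, and since the Artin isomorphism $N\cong\eucl(\caO^S(L))$ is $G$-equivariant it identifies $N^{\tilde G/N}$ with $\eucl(\caO^S(L))^G$ and hence $\tc(\tilde G/N)=\coker(i^\circ_{\tilde G,N})$ with $\coker(i^S_{K,L})$. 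Now Theorem~C, applied to the co-cyclic normal subgroup $N$ of the finite FAb group $\tilde G$, yields $|\tk(\tilde G/N)|=|\tilde G/N|\cdot|\tc(\tilde G/N)|$; since $|\tilde G/N|=|L:K|$ this is exactly \eqref{eq:cap}, and the final divisibility statement follows at once.

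I expect the only real work to lie in the class-field-theoretic bookkeeping of the first two paragraphs: checking that $H_S(L)/K$ is Galois with the stated ramification and splitting behaviour, that $\tilde G^{\ab}$ really is $\eucl(\caO^S(K))$ (equivalently $H_S(K)\subseteq H_S(L)$), and that under the Artin maps the extension-of-ideals map coincides with the transfer and is compatible with the $G$-actions so that the cokernels match. All of this is standard, and no analytic or cohomological input beyond Theorem~C is needed.
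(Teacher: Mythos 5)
Your proof is correct and takes essentially the same route as the paper: both reduce \eqref{eq:cap} to Theorem~C via class field theory together with the compatibility of the Artin reciprocity map with the transfer. The only cosmetic difference is that you work directly with the finite group $\Gal(H_S(L)/K)$, whereas the paper sets $G=\Gal(K^S/K)$ for the maximal extension $K^S/K$ unramified outside $S$ and completely split at $S$, puts $U=G_L$, and notes that $G/[U,U]$ is finite --- which is the same finite group as yours.
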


\begin{proof}
Let $\bar{K}/K$ be an algebraic closure of $K$ containing $L$, and let
$K^S/K$ be the maximal extension of $K$ which is unramified outside $S$ and
completely split for all places in $S$. In particular, $K^S/K$ is a Galois extension.
Let $G=\Gal(K^S/K)$ and let $U=G_L$ denote the pointwise stabilizer of $L$ in $G$.
Then, by class field theory and the finiteness of the class number, $G/[U,U]$ is a finite group, 
$G^{\ab}\simeq \cl(\caO^S(K))$
and $U^\ab\simeq \cl(\caO^S(L))$. The claim then follows from Theorem~C
and the commutativity
of the diagram
\begin{equation}
\label{eq:commdia}
\xymatrix{
\eucl(\caO^S(K))\ar[r]^{i_{K,L}^S}\ar[d]&\eucl(\caO^S(L))\ar[d]\\
G^{\ab}\ar[r]^{i_{G,U}}&U^{\ab}
}
\end{equation}
by the Artin reciprocity law (cf. \cite[Chap.~IV, Thm. 5.5]{neu:alg}).
\end{proof}

Let $G$ be a finite group, and let $N$ be a normal subgroup of $G$
such that $G/N$ is abelian. By H.~Suzuki's theorem, it is well known that
$|G:N|$ divides the order of $\kernel(i_{G,N}\colon G^{\ab}\to N^{\ab})$ (cf. \cite{suz:h94}).
Although Theorem~\ref{thm:h94} gives the answer in the case when $N$ is co-cyclic in $G$,
the following question remains open.

\begin{ques}
\label{q:1}
What is the value of the Hilbert-Suzuki multiplier 
\begin{equation}
\label{eq:hilsuz}
s_{G,N}=\frac{|\kernel(i_{G,N})|}{|G:N|}?
\end{equation}
\end{ques}

\subsection{Theorem D}
\label{ss:thmD}
{\it Proof.} Let $N$ be a (closed) co-cyclic normal subgroup
which of prime index $p$ in the finitely generated group $G$.
By \eqref{eq:euler} and Proposition~\ref{prop:h90}, one has 
\begin{equation}
\label{eq:herbD}
h(G/N,N^{\ab})=p\cdot\frac{|\tc(G/N)|}{|\tk(G/N)|}=p\cdot\rho(G/N)^{-1},
\end{equation}
and thus $\log_p(h(G,N^{\ab}))=1-\log_p(\rho(G/N))$. 
As $\bok^1(G/N,\Ab)$ is a finite $p$-group, one has 
an isomorphism of $\Q_\bullet$-vector spaces
$(N^{\ab})^G_{\Q_\bullet}\simeq G^{\ab}_{\Q_\bullet}$.
Hence, by \eqref{eq:logh}
\begin{equation}
\label{eq:gensch}
\dim_{\Q_\bullet}(N^{\ab}_{\Q_\bullet})=
p\cdot \dim_{\Q_\bullet}(G^{\ab}_{\Q_\bullet})+(1-p)(1-\log_p(\rho(G/N)))
\end{equation}
and hence the claim.
\hfill\qedsymbol

Let $G$ be a pro-$p$ group. 
A pair of open subgroups $(U,V)$, $V\subseteq U$, satisfying $|U:V|=p$
will be called an {\it open $p$-section}. In particular, $V$ is normal in $U$.
As a consequence of \eqref{eq:ratfor} one obtains the following
identification of the global tf-rank for finitely generated pro-$p$ groups.

\begin{cor}
\label{cor:gtf}
Let $G$ be a finitely generated pro-$p$ group of global tf-rank.
Then 
\begin{equation}
\gtf(G)=1-\log_p(\rho(U/V))\geq 0.
\end{equation}
for any open $p$-section $(U,V)$ of $G$.
\end{cor}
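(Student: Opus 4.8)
I would prove Corollary~\ref{cor:gtf} by applying Theorem~D (the generalized Schreier formula \eqref{eq:ratfor}) to a single well-chosen open $p$-section and then using the defining property of the global tf-rank to make the answer independent of the choice. The key point is that for a group of global tf-rank $\gtf(G)$, one has $\tf(W)=\gtf(G)$ for \emph{every} open subgroup $W$, so in the formula $\tf(U)=p\cdot\tf(G)+(1-p)(1-\log_p(\rho(G/U)))$ applied with $G$ replaced by $U$ and $U$ replaced by $V$, both $\tf(V)$ and $\tf(U)$ collapse to the common value $\gtf(G)$.

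\textbf{The steps.} First I would fix an arbitrary open $p$-section $(U,V)$ of $G$; by definition $V$ is normal in $U$ with $|U:V|=p$, so $V$ is in particular a (closed) co-cyclic normal subgroup of prime index $p$ in the finitely generated pro-$p$ group $U$. Hence Theorem~D applies verbatim with the ambient group $U$ in place of $G$ and the subgroup $V$ in place of the index-$p$ subgroup, yielding
\begin{equation*}
\tf(V)=p\cdot\tf(U)+(1-p)\bigl(1-\log_p(\rho(U/V))\bigr).
\end{equation*}
Second, since $G$ is of global tf-rank and both $U$ and $V$ are open (hence of finite index) in $G$, I would substitute $\tf(U)=\tf(V)=\gtf(G)$. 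Third, I would solve the resulting identity $\gtf(G)=p\cdot\gtf(G)+(1-p)(1-\log_p(\rho(U/V)))$ for $\gtf(G)$: moving terms gives $(1-p)\gtf(G)=(1-p)(1-\log_p(\rho(U/V)))$, and dividing by $1-p\neq 0$ (here $p\geq 2$) yields $\gtf(G)=1-\log_p(\rho(U/V))$. Finally, the inequality $\gtf(G)\geq 0$ is immediate from the definition of $\tf$ as a dimension in \eqref{eq:tfr}, so no separate argument is needed there.

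\textbf{The main obstacle.} There is essentially no obstacle: the content is entirely in Theorem~D, and the corollary is a two-line algebraic manipulation plus the observation that the hypothesis "global tf-rank" is exactly what is needed to kill the dependence on $(U,V)$. The only point deserving a word of care is verifying that the hypotheses of Theorem~D are met by the pair $(U,V)$ rather than by $(G,U)$ — namely that one must apply the formula \emph{inside} $U$, using that $U$ is finitely generated (true, since open subgroups of finitely generated pro-$p$ groups are finitely generated) and that $V$ has prime index $p$ in $U$ (true by the definition of open $p$-section). Once this is noted the proof is complete; this is also precisely the identity \eqref{eq:gtf} stated in the introduction.
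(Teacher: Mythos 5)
Your proposal is correct and follows exactly the route the paper intends: apply Theorem~D with the ambient group $U$ and the index-$p$ subgroup $V$, substitute $\tf(U)=\tf(V)=\gtf(G)$ using the global tf-rank hypothesis, and divide by $1-p$. The paper presents the corollary as an immediate consequence of \eqref{eq:ratfor} in precisely this way, and your remarks on why $U$ is finitely generated and why $\gtf(G)\geq 0$ cover the only points needing verification.
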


\subsection{Pro-$p$ groups of strict cohomological dimension less or equal to $2$}
\label{ss:scd2}

The definition of a cohomological Mackey functor can be easily extended to 
a profinite group $G$ (cf. \cite[\S 3]{thw:prdim}).

A cohomological $G^\sharp$-Mackey functor $\boX$ is said to be {\it $i$-injective}, if
the map $i^\boX_{U,V}\colon\boX_U\to\boX_V$ is injective
for all open subgroups $U$, $V$ of $G$, $V\subseteq U$.
An $i$-injective cohomological $G^\sharp$-Mackey functor $\boX$
is said to be {\it of type $H^0$} (or {\it to satisfy Galois descent}) if
$\bok^1(U/V,\boX)=0$ (cf. (\ref{eq:seccoh1})) for all open subgroups $U$, $V$ of $G$, $V$ normal in $U$.
A cohomological $G^\sharp$-Mackey functor of type $H^0$ is said to have the {\it Hilbert 90 property}
if $H^1(U/V,\boX_V)=0$ for all open subgroups $U$, $V$ of $G$, $V$ normal in $U$.

The following theorem extends A.~Brumer's characterisation of pro-$p$ groups
of strict cohomological dimension less or equal to $2$ (cf. \cite[Thm.~6.1]{brum:pseudo}).

\begin{thm}
\label{thm:scd2}
Let $G$ be a pro-$p$ group. Then the following are equivalent.
\begin{itemize}
\item[(i)] $\scd_p(G)\leq 2$;
\item[(ii)] the cohomological $G^\sharp$-Mackey functor $\Ab$ is of type $H^0$;
\item[(iii)] the cohomological $G^\sharp$-Mackey functor $\Ab$ has the Hilbert~90 property;
\item[(iv)] for every open $p$-section $(U,V)$ of $G$ one has
$\tk(U/V)=\tc(U/V)=0$.
\end{itemize}
\end{thm}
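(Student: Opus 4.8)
\noindent\textit{Proof strategy.}
The plan is to establish the circle $(iii)\Rightarrow(ii)\Rightarrow(iv)\Rightarrow(iii)$ by elementary manipulations of the cohomological Mackey functor $\Ab$ on top of Theorem~A, and then to connect this circle to $(i)$ through Brumer's theorem. The implication $(iii)\Rightarrow(ii)$ is a tautology, since the Hilbert~$90$ property is by definition only attributed to cohomological $G^\sharp$-Mackey functors that are already of type~$H^0$. For $(ii)\Rightarrow(iv)$ one only unwinds definitions: a Mackey functor of type~$H^0$ is in particular $i$-injective, so for every open $p$-section $(U,V)$ the transfer $i^\Ab_{U,V}\colon U^{\ab}\to V^{\ab}$ is injective, i.e.\ $\tk(U/V)=\bok^0(U/V,\Ab)=0$ (cf.\ Prop.~\ref{prop:h90}), while $\tc(U/V)=\bok^1(U/V,\Ab)=0$ is part of the definition of ``type~$H^0$''.

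For $(iv)\Rightarrow(iii)$: since $G$ is pro-$p$, every inclusion $V\subseteq U$ of open subgroups refines to a chain $V=V_0\trianglelefteq V_1\trianglelefteq\cdots\trianglelefteq V_n=U$ of index-$p$ steps along which $i^\Ab$ is multiplicative, so the vanishing of all $\tk(U/V)=\kernel(i^\Ab_{U,V})$ over $p$-sections makes $\Ab$ $i$-injective. Feeding $\tc(U/V)=0$ for $p$-sections into $\image(i^\Ab_{U,V})=i^\Ab_{W,V}\bigl(\image(i^\Ab_{U,W})\bigr)$ for $V\trianglelefteq W\trianglelefteq U$ with $|U:W|=p$, together with the ``invariants of invariants'' identity, one shows by induction on $|U:V|$ that $i^\Ab_{U,V}\colon U^{\ab}\to(V^{\ab})^{U/V}$ is an isomorphism for every normal section, i.e.\ $\Ab$ is of type~$H^0$ and satisfies Galois descent. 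Now Theorem~A in the form of Prop.~\ref{prop:h90} gives $\boc_1(U/V,\Ab)=0$ for every $p$-section; plugged into the six-term exact sequence~\eqref{eq:6term} it shows that $\hH^{-1}(U/V,V^{\ab})$ embeds into $\tk(U/V)=0$, and since $U/V$ is cyclic the $2$-periodicity of Tate cohomology forces $H^1(U/V,V^{\ab})\cong\hH^{-1}(U/V,V^{\ab})=0$. A second induction on $|U:V|$, via the Lyndon--Hochschild--Serre five-term sequence for $V\trianglelefteq W\trianglelefteq U$ with $|U:W|=p$ --- whose term $H^1(W/V,V^{\ab})$ vanishes by the inductive hypothesis and whose term $H^1(U/W,(V^{\ab})^{W/V})\cong H^1(U/W,W^{\ab})$ vanishes by the $p$-section case, invoking the Galois-descent isomorphism just established --- then upgrades $H^1(U/V,V^{\ab})=0$ to all normal sections. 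Hence $\Ab$ has the Hilbert~$90$ property.

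It remains to tie the circle to $(i)$, and this is the substantial part. Brumer's characterisation (cf.\ \cite[Thm.~6.1]{brum:pseudo}) identifies $\scd_p(G)\le2$ with the conjunction of $\ccd_p(G)\le2$ and the torsion-freeness of the (continuous, $\Z_p$-valued) group $H^2(U,\Z_p)$ for every open $U\le G$; via the Bockstein sequence of $0\to\Z_p\to\Q_p\to\Q_p/\Z_p\to0$ and $\Ext^1(U^{\ab},\Z_p)\cong U^{\ab}_{\tor}$, the latter is just torsion-freeness of every $U^{\ab}$, which at once yields $\tk(U/V)=\kernel(i^\Ab_{U,V})=0$ for all $p$-sections (a finite $p$-group inside a torsion-free module). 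Independently, $\scd_p(G)\le2$ forces $H^2(U,\Q_p/\Z_p)=0$ for every open $U$ (because $H^3(U,\Z_p)\cong H^2(U,\Q_p/\Z_p)$ when $\Z_p$ carries the discrete topology), hence $H^2(U,\Q_p)=0$; the multiplicativity of the $\Q_p$-Euler characteristic $1-\tf(U)$ along the section $(U,V)$ then gives the Schreier-type identity $\tf(V)=p\,\tf(U)-(p-1)$, which by Theorem~D is equivalent to $\rho(U/V)=1$, i.e., granting $\tk(U/V)=0$, to $\tc(U/V)=0$. This proves $(i)\Rightarrow(iv)$; running the same dictionary backwards gives $(iv)\Rightarrow(i)$, since $(iv)$ makes $\Ab$ of type~$H^0$ with the Hilbert~$90$ property, whence (using the Diederichsen description of the lattices $V^{\ab}\simeq\Z_p^{\,a}\oplus\Z_p[U/V]^{\,c}$ from the proof of Prop.~\ref{prop:herb}(c) together with $\rho(U/V)=1$) one recovers torsion-freeness of the abelianizations and $H^2(U,\Q_p/\Z_p)=0$, and Brumer's theorem returns $\scd_p(G)\le2$. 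The case $G=\triv$, where $(iv)$ is vacuous and $\scd_p(G)=0$, is noted separately.

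I expect the real obstacle to lie in this last step: showing that $\scd_p(G)\le2$ forces not merely $\tk(U/V)=0$ but also $\tc(U/V)=0$ --- equivalently, that a pro-$p$ group of strict cohomological dimension $\le2$ automatically has global tf-rank, of value $0$ or $1$, i.e.\ that $\ccd_p(G)\le2$ and torsion-free abelianizations pin the Diederichsen parameter $a$ of every $p$-section lattice $V^{\ab}$ to the value $1$. This is precisely the content of the Euler-characteristic computation, and it is the point at which one leaves the purely formal theory of Mackey functors and must quote (or reprove in the present language) the substance of Brumer's theorem; some additional care is needed in the non-finitely-generated case, where $V^{\ab}$ need not be a lattice of finite rank and one argues by passing to finitely generated quotients of $G$.
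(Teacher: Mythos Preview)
Your treatment of the circle $(ii)\Leftrightarrow(iii)\Leftrightarrow(iv)$ is correct and coincides with the paper's argument, only with the order of the steps shuffled: the paper proves $(iv)\Rightarrow(ii)$ and $(ii)\Rightarrow(iii)$ separately, whereas you fold these into a single implication $(iv)\Rightarrow(iii)$. The ingredients---factorisation of $i^{\Ab}$ along $p$-chains, induction on $|U:V|$ for Galois descent, Theorem~A plus the six-term sequence~\eqref{eq:6term} plus $2$-periodicity to kill $H^1$ on $p$-sections, and the Hochschild--Serre five-term sequence for the inductive step---are identical.

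Where you diverge from the paper is in linking $(i)$ to the rest, and here there is a genuine gap. The paper disposes of $(i)\Leftrightarrow(ii)$ in one line by citing \cite[Chap.~III, Thm.~3.6.4]{nsw:cohn}, which is precisely the statement that $\scd_p(G)\le 2$ if and only if the transfer $i_{U,V}^{\Ab}\colon U^{\ab}\to(V^{\ab})^{U/V}$ is an isomorphism for every open normal section. Your alternative route via Brumer's criterion, Euler characteristics, and Theorem~D is both more laborious and not sound as written. For $(i)\Rightarrow(iv)$ you invoke Theorem~D, which is stated only for finitely generated $G$; the Euler-characteristic argument likewise needs $\dim_{\Q_p}H^1(U,\Q_p)<\infty$. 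For $(iv)\Rightarrow(i)$ the problem is worse: you appeal to the Diederichsen decomposition of $V^{\ab}$ to deduce torsion-freeness of the abelianisations, but Diederichsen's theorem applies to $\Z_p[U/V]$-\emph{lattices}, i.e.\ it already presupposes that $V^{\ab}$ is torsion-free and finitely generated. Your sketch also does not explain how $H^2(U,\Q_p/\Z_p)=0$ is recovered from $(iv)$. The clean fix is simply to cite the equivalence $(i)\Leftrightarrow(ii)$ as in the paper and drop the last two paragraphs.
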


\begin{proof}
The equivalence (i)$\Rightarrow$(ii) is well known (cf. \cite[Chap.~III, Thm.~3.6.4]{nsw:cohn}),
and (iii) obviously implies (ii). 

Suppose that (ii) holds, and let $(U,V)$ be an open $p$-section of $G$.
In particular, $V$ is a co-cyclic open normal subgroup of $U$,
and Theorem A implies that $\boc_1(U/V,\Ab)=0$. Hence, as \[\kernel(i_{U,V})=\bok^0(U/V,\Ab)=0,\]
the 6-term exact sequence (cf. (\ref{eq:6term})) yields that $\hH^{-1}(U/V,V^{\ab})=0$.
As $U/V$ is cyclic and thus has periodic cohomology of period $2$, this implies that 
\[H^1(U/V,V^{\ab})=\hH^{-1}(U/V,V^{\ab})=0\]
for any open $p$-section of $G$. Let $U/W$ be an open normal section of $G$, i.e. 
$U$ and $W$ are open subgroups of $G$ and $W$ is normal in $U$. As $U/W$ is a finite $p$-group,
there exists a decreasing sequence of open subgroups $(W_k)_{0\leq k\leq n}$ of $U$ satisfying $W_0=U$, $W_n=W$ and
$|W_k:W_{k+1}|=p$. We claim that $H^1(U/W,W^{\ab})=0$. For proving this claim we proceed by induction on $n$.
The previously mentioned argument shows the claim for $n=1$. Assume that $n\geq 2$.
As (ii) holds, one has $(W^{\ab})^{W_1}\simeq W_1^{\ab}$. Hence the induction hypothesis implies that
$H^1(U/W_1,W^{\ab}_1)=0$ and $H^1(W_1/W,W^{\ab})=0$. Thus the 5-term sequence associated to
the Hochschild-Serre spectral sequence (cf. \cite[Thm.~2.4.1]{nsw:cohn})
yields that $H^1(U/W,W^{\ab})=0$ and (iii) holds.

The implication (ii)$\Rightarrow$(iv) is obvious.
Suppose that (iv) holds, and let $(U,W)$ be a section in $G$, i.e.,
$U$ and $W$ are open subgroups of $G$ and $W$ is contained in $U$.
If $W\not=U$, then $N_U(W)$ is strictly larger than $W$. From this property one concludes that
there exists a decreasing sequence of open subgroups $(W_k)_{0\leq k\leq n}$ of $U$ satisfying $W_0=U$, $W_n=W$ and
$|W_k:W_{k+1}|=p$. Moreover, $i_{U,W}=i_{W_{n-1},W}\circ\cdots\circ i_{U,W_1}$.
By hypothesis, one has \[\tk(W_k/W_{k+1})=\kernel(i_{W_k,W_{k+1}})=0,\]
i.e., $i_{W_k,W_{k+1}}$ is injective for $0\leq k\leq n-1$. Hence $i_{U,W}$ is injective.
If $W$ is normal in $U$, we may assume that the open subgroups $W_k$ are normal in $U$.
By hypothesis, one has that $i^\circ_{U,W_1}\colon U^{\ab}\to (W_1^{\ab})^{U/W_1}$
is an isomorphism. By induction, we may suppose that
$i^\circ_{W_1,W}\colon W_1^{\ab}\to (W^{\ab})^{W_1/W}$ is an isomorphism,
i.e., \[(i^\circ_{W_1,W})^{U/W_1}\colon (W_1^{\ab})^{U/W_1}\longrightarrow (W^{\ab})^{U/W}\]
is an isomorphism as well. Thus, as $i_{U,W}=(i^\circ_{W_1,W})^{U/W_1}\circ i^\circ_{U,W_1}$,
this yields (ii).
\end{proof}

Corollary~\ref{cor:gtf} and Theorem~\ref{thm:scd2} imply the following.

\begin{cor}
\label{cor:brum}
Let $G$ be a finitely generated pro-$p$ group of global tf-rank satisfying $\scd_p(G)\leq 2$. Then 
either
\begin{itemize}
\item[(i)] $G=\triv$; or
\item[(ii)] $G\simeq\Z_p$; or
\item[(iii)] $\ccd_p(G)=2$ and $\gtf(G)=1$.
\end{itemize}
\end{cor}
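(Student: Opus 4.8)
The plan is to combine Corollary~\ref{cor:gtf} with Theorem~\ref{thm:scd2} and a classification of the very small cases. By hypothesis $G$ is a finitely generated pro-$p$ group of global tf-rank with $\scd_p(G)\leq 2$, so by Theorem~\ref{thm:scd2} the functor $\Ab$ has the Hilbert~90 property and, in particular, for every open $p$-section $(U,V)$ of $G$ one has $\tk(U/V)=\tc(U/V)=0$; hence $\rho(U/V)=1$. If $G\neq\triv$, pick any such open $p$-section (which exists since $G$ is a nontrivial finitely generated pro-$p$ group, so it has an open normal subgroup of index $p$, and inside that a further subgroup if needed). Then Corollary~\ref{cor:gtf} gives $\gtf(G)=1-\log_p(\rho(U/V))=1-0=1$. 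This already establishes the dichotomy ``$G=\triv$ or $\gtf(G)=1$'' claimed in Corollary~E; for the present, sharper Corollary~\ref{cor:brum} one must additionally separate the cases $\ccd_p(G)\leq 1$ and $\ccd_p(G)=2$.

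For the finer statement I would argue as follows. First recall that $\scd_p(G)\leq 2$ forces $\ccd_p(G)\leq 2$. If $\ccd_p(G)=0$ then $G=\triv$, which is case~(i). If $\ccd_p(G)=1$, then by the classification of pro-$p$ groups of cohomological dimension $1$ (they are free pro-$p$ groups), $G$ is a nontrivial free pro-$p$ group; but a free pro-$p$ group of rank $\geq 2$ has open subgroups of arbitrarily large tf-rank (by the Nielsen--Schreier formula for free pro-$p$ groups), contradicting that $G$ has global tf-rank. Hence $G$ is free of rank $1$, i.e.\ $G\simeq\Z_p$, which is case~(ii); and indeed $\gtf(\Z_p)=1$ consistent with the above. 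Finally, if $\ccd_p(G)=2$, we are in case~(iii), and the computation of the previous paragraph gives $\gtf(G)=1$.

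The one point requiring a little care is the $\ccd_p(G)=1$ case: I must invoke that $\ccd_p(G)=1$ implies $G$ is free pro-$p$ (a theorem of Serre, via the work on the Stallings--Swan characterization of cohomological dimension one in the profinite setting), and then use that the tf-rank of an open subgroup of index $p^k$ in a free pro-$p$ group of rank $d$ is $1+p^k(d-1)$, which is constant in $k$ only when $d=1$. Everything else is a direct substitution into Corollary~\ref{cor:gtf}. I expect the main (mild) obstacle to be correctly citing the cohomological-dimension-one classification and the Schreier index formula for free pro-$p$ groups; once those are in place, the proof is immediate. A clean write-up is:

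\begin{proof}
Since $\scd_p(G)\leq 2$ we have $\ccd_p(G)\leq 2$. If $\ccd_p(G)=0$, then $G=\triv$ and we are in case~(i). If $\ccd_p(G)=1$, then $G$ is a nontrivial free pro-$p$ group; for an open subgroup $U$ of index $p^k$ in a free pro-$p$ group of rank $d$ one has $\tf(U)=\rk(U)=1+p^k(d-1)$ by the Nielsen--Schreier formula, and this is independent of $k$ if, and only if, $d=1$. As $G$ has global tf-rank, $d=1$, i.e.\ $G\simeq\Z_p$, and we are in case~(ii). Finally, suppose $\ccd_p(G)=2$. By Theorem~\ref{thm:scd2}, for every open $p$-section $(U,V)$ of $G$ one has $\tk(U/V)=\tc(U/V)=0$, hence $\rho(U/V)=1$. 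As $G$ is a nontrivial finitely generated pro-$p$ group, such an open $p$-section exists, and Corollary~\ref{cor:gtf} yields $\gtf(G)=1-\log_p(\rho(U/V))=1$. This is case~(iii).
\end{proof}
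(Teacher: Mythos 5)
Your proof is correct and follows exactly the route the paper indicates (the paper offers no written proof beyond ``Corollary~\ref{cor:gtf} and Theorem~\ref{thm:scd2} imply the following''): you combine Theorem~\ref{thm:scd2}(i)$\Rightarrow$(iv) with Corollary~\ref{cor:gtf} to get $\gtf(G)=1$ when $G\neq\triv$, and the case split on $\ccd_p(G)\in\{0,1,2\}$ via Serre's theorem ($\ccd_p=1$ implies free pro-$p$) and the Schreier index formula correctly isolates case~(ii). The only cosmetic point is that in the $\ccd_p(G)=1$ case you could shortcut the Schreier computation by noting that global tf-rank together with $\gtf(G)=1$ already forces $\tf(G)=\rk(G)=1$, but your version is equally valid.
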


\begin{rem}
\label{rem:scd2}
Let $\Z_p^\times$ denote the multiplicative group of the $p$-adic integers, and let 
$\theta\colon \Z_p\to\Z_p^\times$ be a homomorphism of profinite groups with open image.
It is straightforward to verify that the semi-direct product $G_\theta=\Z_p\ltimes_\theta\Z_p$
is a $p$-adic analytic pro-$p$ group satisfying $\ccd_p(G_\theta)=\scd_p(G_\theta)=2$ and $\gtf(G_\theta)=1$. 
Moreover, any $p$-adic analytic group satisfying $\ccd_p(G_\theta)=\scd_p(G_\theta)=2$ and $\gtf(G_\theta)=1$
must be isomorphic to some $G_\theta$, 
\[\theta\in\left\{\,\alpha\in\Hom(\Z_p,\Z_p^\times)\mid\alpha\ \mathrm{cont. \ and \ open}\,\right\}.\]
Therefore, the following question arises.

\begin{ques}
\label{q:1}
Let $G$ be a finitely generated pro-$p$ group
of global tf-rank $1$ satisfying $\scd_p(G)\leq 2$.
Is $G$ necessarily $p$-adic analytic?
\end{ques}

In a private discussion with the second author A.~Jaikin-Zapirain asked
a similar question.

\begin{ques}[A.~Jaikin-Zapirain, 2012]
\label{q:2}
Let $F$ be a finitely generated free pro-$p$ group, 
and suppose that for some injective homomomorphism $\beta\colon \Z_p\to\Aut(F)$
the pro-$p$ group $G=\Z_p\ltimes_\beta F$ is of global tf-rank $1$.
Does this imply that $F$ is of rank $1$?
\end{ques}

An affirmitive answer to Question~\ref{q:2} would settle Question~\ref{q:1}
in some important particular cases.
\end{rem}

\section*{Acknowledgements}
In 1999, K.W.~Gruenberg gave a talk on {\it Transfer kernels} at the
{\it Mathematisches Institut der Universit\"at Freiburg}, Germany, in honour of O.H.~Kegel's 65$^{th}$ birthday.
This talk and the subsequent discussions with K.W.Gruenberg on various occasions were the motivation for the second author
to introduce and study the section cohomology groups of cohomological Mackey functors (cf. \S\ref{ss:seccoh}).

\bibliography{hilbert}
\bibliographystyle{amsplain}
\end{document}